\def\Ximp{X^{\mathrm{imp}}}
\def\Xage{X^{\mathrm{age}}}
\newtheorem{rema}{Remark}
\newtheorem{example}{Example}
\newtheorem{defi}{Definition}
\newtheorem{lemma}{Lemma}
\newtheorem{corollary}{Corollary}
\newtheorem{prop}{Proposition}
\newtheorem{thm}{Theorem}
\newtheorem{problem}{Problem}
\newtheorem{assumption}{Assumption}
\def \a   {\alpha}
\def \g   {\gamma}
\def \eps {\varepsilon}
\def\E{{\mathbb{E}}}
\def\F{\mathcal{F}}
\def\P{{\mathbb{P}}}
\def\R {{\mathbb{R}}}
\newcommand{\Z}{\mathbb{Z}}
\def\|{\,|\,}
\def\bn#1\en{\begin{align*}#1\end{align*}}
\def\bnn#1\enn{\begin{align}#1\end{align}}
\begin{document}
\title[Impatient RW]{Impatient random walk}
\date{\color{red}{\today}}
\author[J. Engl\"ander]{J\'anos Engl\"ander}
\address[J. Engl\"ander]{Department of Mathematics
\\ University of Colorado\\  Boulder, CO-80309-0395}
\email{janos.englander@colorado.edu.}
\urladdr{http://www.colorado.edu/math/janos-englander}
\author[S. Volkov]{Stanislav Volkov}
\address[S. Volkov]{Centre for Mathematical Sciences\\ Lund University\\ Lund 22100-118, Sweden}
\email{s.volkov@maths.lth.se}
\urladdr{http://www.maths.lth.se/~s.volkov/}
\thanks{The hospitality of Microsoft Research, Redmond and of the University of Washington, Seattle is gratefully acknowledged by the first author. Research of the second author was supported in part by the  Swedish Research Council grant VR2014--5157.}
\keywords{Random walk, impatient random walk, ageing random walk, passage generating function, transience, recurrence}
\subjclass[2010]{60J10}
\begin{abstract}
We introduce a new type of random walk where the definition of edge repellence/reinforcement is very different from the one in the ``traditional'' reinforced random walk models, and investigate its basic properties, such as null vs.\ positive recurrence, transience, as well as the speed. The two basic cases will be dubbed ``impatient'' and``ageing'' random walks.
\end{abstract}
\dedicatory{Dedicated to  B\'alint T\'oth on the occasion of his $60$th birthday}
\maketitle
%\tableofcontents

\section{Introduction}\label{Intro}
\subsection{Model}
Consider an infinite connected  graph~$G$. The set of edges will be denoted by~$E(G)$. With some slight abuse of notation,  $v\in G$ will mean that~$v$ is a vertex of~$G$. Consider a random walk $X=\{X_n\}_{n\ge 0}$ on the vertex set of  $G$, with the jumps restricted to~$E(G)$. 

\begin{assumption}[Non-degeneracy]\label{Non-degeneracy}  The jumps have strictly positive probabilities for each edge in both directions. In particular, $X$ is an irreducible Markov chain on $G$.
\end{assumption}

Fix a vertex $v_0\in G$ which we call ``origin'', and assume that the walk starts at this point, $X_0=v_0$; for $G=\mathbb Z^d$, the default will be $v_0:=\mathbf{0}$.

\begin{defi}[Passage times]
A  sequence $s_0,s_1,s_2,\dots$ of nonnegative real numbers will be called a {\it sequence of passage times} if  $s_0=1$. 
\end{defi}
\begin{defi}[Walk modified by passage times]
We will modify the walk in such a way that if it has crossed an edge $e$ exactly  $k$ times before, then it takes  $s_k$ units of time (as opposed to $1$) to cross this edge again, in either direction; in particular, it takes  one unit of time to cross the edge for the first time. 
\end{defi}
The two basic cases are as follows:

\begin{defi}[Impatient and ageing walks]
Let $s_0,s_1,s_2,\dots$ be a given sequence of passage times. We will call the corresponding (modified) walk
\begin{description}
 \item[(i)] {\it impatient}\footnote{The intuitive meaning is clear:  the more the walker crosses the same edge, the faster it happens.}  when $s_k\downarrow 0$;
 \item[(ii)] {\it ageing} when $s_k\uparrow \infty$.
\end{description}
\end{defi}

\begin{rema}
If $s_k\downarrow s_\infty>0$ or $s_k\uparrow s_\infty<\infty$, then the questions (about recurrence, speed etc.) we investigate in this paper will be equivalent to the corresponding ones related to the original random walk, hence these cases are not interesting and are not considered in our paper.
\end{rema}

To have a more formal definition, note the main feature of the process we are studying: the system's ``actual'' time depends on the local time of the walk. 

\begin{defi}[Actual time]\label{actualtimedef}
Let
\begin{align*}
Z(e,m):=\sum_{i=1}^m {\mathbf 1}_{(X_{i-1},X_i)=e}, \quad e\in E(G),\ m\ge 1,
\end{align*}
be the number of occasions edge $e\in E(G)$ has  been crossed by time $m$. Then the \emph{actual time} after $m\ge 1$ steps is
\begin{align*}
T(m):=\sum_{k=1}^{m} s_{Z((X_{k-1},X_k),\, k-1)}.
\end{align*}
\end{defi}
In fact, it is more convenient to work in continuous time by extending the random function $T$ (time) as the nondecreasing function
\begin{align*}
T(t):=\sum_{k=1}^{\lfloor t\rfloor } s_{Z((X_{k-1},X_k),\, k-1)}
+(t-\lfloor t\rfloor )s_{Z((X_{\lfloor t\rfloor},X_{\lfloor t\rfloor+1}),\lfloor t\rfloor)},
\end{align*}
with the convention that for $t<1$, the value of the first sum is zero and then
$T(t)=t s_{Z((X_{0},X_{1}),0)}=t s_0=t.$

Let the random function $U:[0,\infty)\to[0,\infty)$ denote the right continuous generalized inverse of  $T$, that is, $U(t):=\sup\{s: T(s)\le t\}.$ With the exception of one case, we will work with $s_k>0$ for all $k\ge 0$, and then $T$ is strictly increasing in $t$ and $U=T^{-1}$. 

\begin{defi}[Definition of $\Ximp$ and  $X^{\mathrm{age}}$ via time change]\label{timechangedef}
The impatient random walk $\Ximp$  (ageing random walk $X^{\mathrm{age}}$) is defined via 
\begin{align*}
\Ximp(T(t))=X_{\lfloor t\rfloor},t\ge 0\ \ \ \ (X^{\mathrm{age}}(T(t))=X_{\lfloor t\rfloor},t\ge 0),
\end{align*}
or, equivalently, by
\begin{align*}
\Ximp(t):=X_{\lfloor U(t)\rfloor},t\ge 0\ \ \ (X^{\mathrm{age}}(t):=X_{\lfloor U(t)\rfloor},t\ge 0),
\end{align*}
where $U$ is as above.
Thus, $\Ximp$ and $X^{\mathrm{age}}$ move discontinuously\footnote{But one can also imagine that the walker is actually crossing the edge continuously with a speed depending on the passage time sequence.} according to the actual time.
\end{defi}

\subsection{Motivation}
Imagine that at every edge, one has to perform a certain task. For example, the edge represents a piece of road, where driving through is not trivial for some reason. Or that piece of connection between the  vertices is itself a small maze, one has to learn to solve. Then, the more one solved it in the past, the quicker it goes, and so the impatient walk models a learning process.\footnote{Our original model was more mundane: a person window shopping who gets bored quickly by the stores of any street she has already visited.}

Similarly, one can think of a model where the roads, or paths which are often used deteriorate with time, and therefore passing them becomes harder and harder and thus takes more and more time. It seems that ageing random walk can provide a good model for this situation.

While the model we introduce is somewhat reminiscent of the famous edge-reinforced random walk (see \cite{Pemantle} for a survey) as well as the ``cookie'' walk (introduced in~\cite{Zerner}), the behaviour in our model differs significantly from these latter ones, since in our case the transition probabilities remain intact while ``reinforcement'' affects only passage times.

\subsection{Notation}
As usual, $\Z_+$ will denote the set of non-negative integers and~$\Z^d$ will denote the~$d$-dimensional integer lattice. We will write~$a_n\asymp b_n$ if~$\lim_{n\to\infty} a_n/b_n=1$ and~$a_n\sim b_n$  if~$a_n/b_n=O(1)$ and~$b_n/a_n=O(1)$. The letters~$\P$ and~$\E$ will denote the probability and expectation corresponding to  the impatient/ageing random walk, respectively; when the starting point is emphasized, we will write $\P^{v_{0}}$ and $\E^{v_{0}}$.

\subsection{Questions we investigate}

One object we would like to study is $\widetilde\tau$,  the return time to the origin for the impatient walk, which is defined precisely as follows.

\begin{defi}[Actual return time $\widetilde\tau$]
Define $\tau_n$ and $\widetilde \tau_n$, $n=0,1,2,\dots$, by 
\begin{align*}
\tau_n(v_0)&:=\min\{k> \tau_{n-1}(v_0):\ X_{k}=v_0 \},\\
\widetilde\tau_n(v_0)&:=T(\tau_n(v_0)),
\end{align*} 
(with the implicit assumption $\tau_{-1}(v_0)=0$)
the latter being the total actual time spent by $\Ximp$ during the excursion from the origin to the origin starting at~$X^{\mathrm{imp}}(T(n))=v_0$.
\end{defi} 

An interesting phenomenon which arises in our model is that, depending on the passage times,~$\Ximp$  can be  positive recurrent even if the original walk was null-recurrent.

\begin{rema}
The distribution of $\widetilde\tau_n$ will in general depend on the history of the process  $\F_n:=\sigma\{X_0,X_1,\dots,X_{\tau_{n-1}}\}$.
\end{rema}

Another aspect of interest is the spatial speed (spread) of the process. We now need a definition.

\begin{defi}[Infinitely impatient walk]
Consider the walk on $G:=\Z_+$, and  an extreme case, when $s_k=0$ for all $k\ge 1$. That is, old edges are passed instantaneously. We call this walk the ``infinitely impatient walk'', and denote it by $X^{\mathrm{inf.imp}}$.
\end{defi} 

Clearly, $X^{\mathrm{inf.imp}}$ just steps to the right every time unit, because excursions to the left happen in ``infinitesimally small'' times. Hence, it spreads with constant speed. On the other hand, when $s_k=1$, for $k=1,2,...,$ we get the classical random walk for which the range up to $n$ scales with $\sqrt{n}$. So, this indicates that the scaling is always between $\sqrt{n}$ and $n$, and it depends on the passage times in some way. See also Remark \ref{strongweakspeed} and Theorem \ref{inf.imp.thm} below. This latter theorem will also shed some light on how the classical ArcSine Law is modified in our setting.

\subsection{Basic notions and a useful lemma}

When the passage times are summable, $\Ximp$ can only spend a finite amount of time (uniformly bounded by $S$) on any given edge. Accordingly, we make the following definition.
\begin{defi}[Strongly and weakly impatient random walks]\label{strong.weak}
The random walk will be called {\it strongly impatient} if
\\
(A1)\centerline{ $1\le S:=\sum_{k=0}^{\infty} s_k <\infty,$}
\\
and 
{\it weakly impatient} if
\\
(A2)\centerline{ $S:=\sum_{k=0}^{\infty} s_k =\infty.$}
\\
\end{defi}
\begin{rema}[Strong/weak impatience and speed]\label{strongweakspeed}
Clearly, when the walk is strongly impatient, it spends at most $S$ time on each edge, and so by time $t$ (we are talking about ``actual time'' here), it has visited at least $\lfloor t/S\rfloor$ different edges. This means that the spread (measured by the number of distinct edges crossed by the process up to time~$t>0$) is linear, with the constant being between~$1/S$ and~$1$ (because it has spent at least one unit of time on each edge visited). It would be desirable to figure out how the constant depends  on the passage times. A reasonable conjecture is that it is~$1/S$. See Section~\ref{speed.section}.

In order to actually change linearity and get closer to order~$\sqrt{t}$, one needs the walk to be weakly impatient. And so the question, in this case, is how the passage times will determine the order between linear and square root.
\end{rema}

Next, regarding recurrence, we make the following definitions.

\begin{defi}[Recurrence and positive recurrence]
We will call the impatient random walk
\begin{enumerate} 
\item[{\sf (B1)}]\emph{recurrent} if $\widetilde\tau_n(v_0)<\infty$, $\P^{v_{0}}$-a.s.  for all~$n\ge 0$ and all $v_0\in G$;
\item[{\sf (B2)}]\emph{transient} if it is not recurrent;
\item[{\sf (C1)}]\emph{positive recurrent} 
if~$\E^{v_{0}}  \widetilde\tau_n(v_0)<\infty$ \ for all~$n\ge 0$ and all $v_0\in G$;
\item[{\sf (C2)}]  \emph{null recurrent} if it is recurrent and $\E^{v_{0}}  \widetilde\tau_n(v_0)=\infty$ \ for all~$n\ge 0$ and all $v_0\in G$.
\end{enumerate}
\end{defi}

Clearly, if $\widetilde\tau_n(v_0)<\infty$ a.s.  for {\it some}~$n\ge 0,v_0\in G$, then $X$ must be recurrent, in which case~$\widetilde\tau_n(v_0)<\infty$ a.s.\ for {\it all}~$n\ge 0$ and $v_0\in G$.

A similar statement holds for positive recurrence. \begin{thm}[Process property]\label{proc.prop} Recall Assumption~\ref{Non-degeneracy} and assume also that $X$ on $G$  is recurrent. Then  $X^{\mathrm{imp}}$  is either positive recurrent or null recurrent. In other words,  the   properties in {\sf (C1--C2)} do not depend on the choice of~$n$ or~$v_0$. 
\end{thm}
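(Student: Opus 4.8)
The plan is to reduce the positive/null dichotomy to a statement about the expected \emph{actual} durations of the successive excursions of $X$ from the origin, and then to exploit that, although these durations are \emph{not} i.i.d.\ (the passage times couple them through the cumulative edge-crossing counts), three things hold: (a) the excursion \emph{shapes} are i.i.d.; (b) for an impatient walk the expected actual duration of an excursion is a non-increasing function of the profile of edge-crossing counts it starts from; and (c) that function changes by at most a bounded amount under a bounded change of the profile. I would begin with reductions: since $X$ is recurrent, $\tau_n(v_0)<\infty$ a.s., whence $\widetilde\tau_n(v_0)=T(\tau_n(v_0))<\infty$ a.s., so $\Ximp$ is automatically recurrent and only the finiteness of $\E^{v_0}\widetilde\tau_n(v_0)$ is in question. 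Setting $\Delta_j:=T(\tau_j(v_0))-T(\tau_{j-1}(v_0))$ (with $\tau_{-1}:=0$, $T(0):=0$), the actual time accrued during the $j$-th excursion from $v_0$, we have $\widetilde\tau_n(v_0)=\sum_{j=0}^n\Delta_j$, so it suffices to prove (i) for each fixed $v_0$, the numbers $\E^{v_0}\Delta_j$, $j\ge 0$, are either all finite or all infinite, and (ii) $\E^{v}\Delta_0<\infty$ for some $v\in G$ iff for all $v\in G$. I would then introduce, for a vertex $v$ and a profile $\ell\colon E(G)\to\Z_+$, the quantity $g_v(\ell)$, the expected actual time of a \emph{fresh} excursion from $v$ started from $\ell$, and let $L_j$ be the profile $e\mapsto Z(e,\tau_{j-1}(v_0))$ present at the start of the $j$-th excursion. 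By the strong Markov property of $X$, conditionally on $\F_j$ the $j$-th excursion is a fresh excursion from $v_0$ meeting crossing counts $L_j$ plus its own internal ones; hence $\E^{v_0}[\Delta_j\mid\F_j]=g_{v_0}(L_j)$, the shapes $\pi_1,\pi_2,\dots$ are i.i.d., and $g_{v_0}(0)=\E^{v_0}\Delta_0$.

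For (i), I would first observe that along a fixed excursion path the accrued time is a sum of terms $s_{\ell(e)+c}$ over the crossings ($c$ the internal count), which is non-increasing in $\ell$ because $s_k$ is non-increasing; so each $g_v$ is non-increasing in $\ell$, and a telescoping estimate using $s_0=1$ gives $0\le g_v(\ell)-g_v(\ell')\le\sum_e(\ell'(e)-\ell(e))$ whenever $\ell\le\ell'$ with finitely supported difference. Since $0=L_0\le L_1\le\cdots$, monotonicity yields $\E^{v_0}\Delta_0\ge\E^{v_0}\Delta_1\ge\cdots$, so finiteness of $\E^{v_0}\Delta_0$ passes to every $j$. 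For the converse I would fix $j\ge 1$ and condition on the event $A$ that the $(j-1)$-th excursion is a single out-and-back step $v_0\to Y\to v_0$: this $A$ depends only on the $(j-1)$-th shape, $\P(A\mid\F_{j-1})=p_\ast:=\sum_{y\sim v_0}p_{v_0,y}\,p_{y,v_0}>0$ is a positive constant (Assumption~\ref{Non-degeneracy}), and on $A$ one has $L_j=L_{j-1}+\delta$ with $\sum_e\delta(e)=2$, whence $g_{v_0}(L_j)\ge g_{v_0}(L_{j-1})-2$ there. Using $g_{v_0}\ge 0$, the $\F_{j-1}$-measurability of $g_{v_0}(L_{j-1})$, and the conditional probability of $A$, this yields
\[
\E^{v_0}\Delta_j=\E^{v_0}g_{v_0}(L_j)\ \ge\ \E^{v_0}\big[\mathbf{1}_A\big(g_{v_0}(L_{j-1})-2\big)\big]=p_\ast\,\E^{v_0}\Delta_{j-1}-2p_\ast ,
\]
so $\E^{v_0}\Delta_j<\infty$ forces $\E^{v_0}\Delta_{j-1}<\infty$; descending to $j=0$ and re-ascending by monotonicity gives (i).

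For (ii), by connectedness it suffices to pass from $v_0$ to an arbitrary neighbour $w$ (the reverse passage being symmetric), so I would assume $g_{v_0}(0)<\infty$, fix $w\sim v_0$, and run a fresh walk from $v_0$. Its i.i.d.\ shapes each visit $w$ with the same probability $q>0$, so the number $M$ of excursions after which $w$ has been visited at least twice has $\E M\le 2/q<\infty$; as $\{M\ge j\}$ is determined by the first $j-1$ shapes (hence is $\F_j$-measurable) and $\E^{v_0}[\Delta_j\mid\F_j]=g_{v_0}(L_j)\le g_{v_0}(0)$, a Wald-type estimate gives
\[
\E^{v_0}\big[T(\rho_2)\big]\ \le\ \E^{v_0}\Big[\textstyle\sum_{j=0}^{M}\Delta_j\Big]=\sum_{j\ge 0}\E^{v_0}\big[\mathbf{1}_{\{M\ge j\}}\,g_{v_0}(L_j)\big]\ \le\ (1+\E M)\,g_{v_0}(0)<\infty ,
\]
where $\rho_2$ is the epoch of the second visit to $w$ and $\rho_2\le\tau_M$. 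On the event $B:=\{X_1=w\}$, of probability $p_{v_0,w}>0$, the profile present when $w$ is first reached (at time $1$) is a fixed profile $\ell^\ast$ of total mass $1$; conditionally on $\F_1$ the stretch from time $1$ to $\rho_2$ is a fresh $w$-excursion from $\ell^\ast$, so its conditional expected duration is $g_w(\ell^\ast)\ge g_w(0)-1$, and since $T$ is nondecreasing
\[
\E^{v_0}\big[T(\rho_2)\big]\ \ge\ \E^{v_0}\big[(T(\rho_2)-T(1))\,\mathbf{1}_B\big]=\P(B)\,g_w(\ell^\ast)\ \ge\ \P(B)\big(g_w(0)-1\big) ,
\]
whence $g_w(0)\le 1+\E^{v_0}[T(\rho_2)]/\P(B)<\infty$. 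Combining (i) and (ii): either $\E^{v}\widetilde\tau_n(v)<\infty$ for all $n$ and $v$, or $=\infty$ for all $n$ and $v$ (recurrence being automatic in the latter case), which is the dichotomy.

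I expect the main obstacle to be exactly that the excursions of $\Ximp$ are not i.i.d.\ — the passage times couple them through the cumulative edge-crossing profile — so there is no direct renewal argument; what saves the day is the combination of i.i.d.\ excursion \emph{shapes}, monotonicity of $g_v$ in the starting profile (precisely where impatience, $s_k\downarrow$, enters), and the bounded-perturbation bound for $g_v$. The two genuinely delicate steps will be the ``backward'' implication in (i) and the lower bound on $g_w(0)$ in (ii); in both one conditions on an explicit small-probability event — a ``minimal excursion'', respectively ``$w$ is hit at the very first step'' — on which the relevant change of the edge-crossing profile is a fixed bounded quantity.
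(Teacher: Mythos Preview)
Your argument is correct and shares the paper's two-step skeleton—first vary $n$ with $v_0$ fixed, then vary $v_0$—and also the two key mechanisms: monotonicity of the expected excursion time in the impatient regime, and the observation that a bounded change of the edge-crossing profile changes that expectation by only a bounded amount. The tactical choices, however, differ. For the ``backward'' direction in (i), the paper conditions on the event that \emph{all} of the first $n$ excursions traverse a single fixed edge $e=(v_0,v_1)$; then, since $e$ is incident to $v_0$, the $(n{+}1)$st excursion crosses $e$ at most twice, so the change in expected actual time on $e$ is a concrete finite number and all other edges are untouched. You instead prove a general perturbation inequality $0\le g_v(\ell)-g_v(\ell')\le \sum_e(\ell'(e)-\ell(e))$, condition only on the immediately preceding excursion being a single out-and-back, and descend inductively. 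For (ii) the paper runs the walk \emph{from the neighbour} $v_1$, bounds the expected time to reach $v_0$, and then dominates the time until the next visit to $v_1$ by a geometric number of $v_0$-excursions each of expected actual length at most $a=g_{v_0}(0)$. Your route stays at $v_0$, uses a Wald-type bound to show the second visit time $\rho_2$ to $w$ has $\E^{v_0}T(\rho_2)<\infty$, and then extracts a lower bound $g_w(0)-1$ from the event $\{X_1=w\}$. Your packaging via $g_v(\ell)$ makes both directions and both parts uniform and slightly cleaner; the paper's choices are more concrete and avoid the auxiliary lemma, at the cost of the somewhat informal ``by an argument very similar to (i)'' step in (ii).
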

\begin{proof} We prove this statement by verifying that:
\begin{enumerate}
\item[{\sf (i)}] for given $v_0\in G$, the property does not depend on $n\ge 0$;
\item [{\sf (ii)}] for $n=0$, the property does not depend on choice of $v_0\in G$.
\end{enumerate}
{\sf (i)} Assume first that $\E^{v_{0}}  \widetilde\tau_0(v_0)<\infty$. Then, because of the monotonicity of $s$, $\E^{v_{0}}  \widetilde\tau_n(v_0)<\infty$  for~$n>0$, as well.

Now assume that $\E^{v_{0}}  \widetilde\tau_0(v_0)=\infty$. Let $e=(v_0,v_1)$ be an outgoing edge  from $v_0$. If $n\ge 1$  then (because of the non-degeneracy of $X$ on $G$) the event
$$
A_n:=\{X_0=v_0,X_1=v_1,X_2=v_0,X_3=v_1,\dots, X_{2n-1}=v_1,X_{2n}=v_0\}
$$
(i.e.\  the first $n$ excursions consist only of traversing $e$ back and forth, so that this edge has been crossed $2n$ times) has a positive probability.

On~$A_n$, the  next passage time on $e$ has been set to $s_{2n}$, while no other edge has been crossed. It is enough to show that 
\begin{equation}\label{needs.pf}
\E (\widetilde\tau_n(v_0)\mid A_n)=\infty.
\end{equation}

Now, \eqref{needs.pf} would certainly be true without the impatience mechanism of the model, as we assumed that  $\E^{v_{0}} \widetilde\tau_0=\infty$, so our task is to prove that this mechanism does not change the validity of \eqref{needs.pf}.

To this end, let $\eta$ denote the number of crossings of $e$ in the $n+1$st excursion starting at~$v_0$. Then, either $\eta=0$ (when both the initial and the last edges are different from $e$), or $\eta=1$ (when the final edge is $e$ and the initial edge is not, or vice versa), or $\eta=2$ (when both the initial and the last edges are~$e$). Recall the notion of ``actual time'' from Definition~\ref{actualtimedef}. If $p_j:=\P(\eta=j)$ for~$j=0,1,2$, then the expected actual time spent on $e$ in the excursion with $s_0=1$ initial passage time is $p_1 +p_2 (1+s_1)$, whereas with $s_{2n}$ initial passage time it is $p_1 s_{2n}+p_2(s_{2n}+s_{2n+1}).$ These are finite quantities and therefore resetting the initial time to $s_{2n}$ from $s_0$ does not change the finiteness of the expected actual return time. 

\smallskip
{\sf (ii)}  Assume now that $a:=\E^{v_0} \widetilde\tau_0(v_0)<\infty$. We will show that for any $v_1\neq v_0$ we also have~$\E^{v_1} \widetilde\tau_0(v_1)<\infty$. Since $G$ is connected, without the loss of generality, we may (and will) assume that they $v_1$ and $v_0$ are neighbours on $G$.

Let $p_{0\to 1}:=\P^{v_{0}}(X_1=v_1)>0$.
Note that $\E(\kappa\| X_0=v_0,X_1=v_1)<\infty,$ where 
\begin{align*}
\kappa&:=T(\min\{k\ge 1:\  X_k=v_0\}),\quad \text{since }\\
a&\ge p_{0\to 1}\cdot \E(\kappa\| X_0=v_0,X_1=v_1).
\end{align*}
Once we have this, using an argument very similar to the one in part (i), one can see that $\E^{v_1} (\kappa)<\infty$ also holds.

Let $X_0=v_1$.  
The argument below shows even that the expected time it takes to return to $v_1$ {\it after visiting $v_0$ en route} is finite. Note that, because of the monotonicity of $s$, the expectation of the actual time for $\Ximp$ to return to $v_0$ is not more than $a$; the same is true for the consecutive excursions from $v_0$ to $v_0$. However, during each of these excursions the walk $X$ will visit $v_1$ with probability at least $p_{0\to 1}$. Therefore, if $\xi$ is an auxiliary  geometric random variable with range $\{0,1,2,...\}$ and parameter $p_{0\to1}$ under the law $\mathbf P$, then, using that $\E^{v_{1}} \kappa<\infty$, we have
$$
\E^{v_1}\widetilde\tau_0(v_1)\le \E^{v_1} \kappa + 1+a\mathbf {E}\xi=\E^{v_1} \kappa + 1+ \frac{a(1-p_{0\to1})}{p_{0\to1}}<\infty,
$$
completing the proof. 
\end{proof}

\medskip
Given that the original walk is recurrent, so are the impatient and ageing random walks,  since  $T(n)<\infty$, whenever $n<\infty$. The question of positive vs.\ null-recurrence is not so trivial though. The statement below follows from the obvious inequality~$\widetilde\tau\le \tau$ for $\Ximp$.

\begin{rema}
If the original walk $X$ is positive recurrent  then $\Ximp$ is also positive recurrent.
\end{rema}

Finally, define
$M:={\sf card}\{(X_{i-1},X_i),\ i=1,\dots,\tau_1(v_0)\}$, that is, $M$ is the number of distinct edges crossed by $X$ between consecutive visits to the origin.  The following statement involving~$M$ will be useful later.

\begin{lemma}[Size of excursion area]\label{lemmacr} The impatient walk is
\begin{enumerate}
\item[(i)] positive recurrent, provided that the walk is strongly impatient and $\E M<\infty$;
\item[(ii)] null-recurrent, provided that $\E M=\infty$. 
\end{enumerate}
\end{lemma}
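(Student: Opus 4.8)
The plan is to relate the actual return time $\widetilde\tau_1(v_0)$ to the random variable $M$ (the number of distinct edges crossed during one excursion) in both directions. For part (i), observe that under strong impatience each edge visited during the excursion contributes at most $S$ to the actual time, since an edge $e$ crossed $j$ times in total contributes at most $s_0+s_1+\dots+s_{j-1}\le S$. Hence $\widetilde\tau_1(v_0)=T(\tau_1(v_0))\le S\cdot M$, and taking expectations gives $\E^{v_0}\widetilde\tau_1(v_0)\le S\,\E M<\infty$. By Theorem~\ref{proc.prop} this suffices, since positive recurrence does not depend on $n$ or $v_0$; one should just note at the outset that recurrence of $X$ (needed to invoke that theorem, and also for $\widetilde\tau$ to be a.s.\ finite) is implied here, because $\E M<\infty$ forces $\tau_1(v_0)<\infty$ a.s.

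For part (ii), the reverse bound is what we need: the walk spends at least one unit of actual time the first time it traverses any given edge (recall $s_0=1$), so $\widetilde\tau_1(v_0)=T(\tau_1(v_0))\ge M$, the contribution of the first crossings alone. Therefore $\E^{v_0}\widetilde\tau_1(v_0)\ge\E M=\infty$, and again by Theorem~\ref{proc.prop} the walk is null-recurrent (it is recurrent since $X$ is — this is the standing assumption in the statement's phrasing, or follows because $\widetilde\tau_n\le\tau_n$ is irrelevant here and one simply assumes $X$ recurrent; if $X$ were transient there would be nothing to prove as the impatient walk would be transient too). Note part (ii) needs no impatience hypothesis at all, only $s_0=1$ together with monotonicity ensuring $s_k\le 1$ is \emph{not} used — in fact only $s_k\ge$ the value of the first-crossing passage time matters, and that first value is $s_0=1$ by the definition of a passage-time sequence.

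The two halves are genuinely easy once the correct two-sided sandwich $M\le\widetilde\tau_1(v_0)\le S\cdot M$ (the right inequality under (A1)) is written down. The only point requiring a little care — and the place I expect a referee to look — is the bookkeeping in the upper bound: one must be sure that summing $s_0+\dots+s_{j_e-1}$ over all distinct edges $e$, where $j_e$ is the number of times $e$ is crossed in the excursion, genuinely dominates $T(\tau_1(v_0))$. This is immediate from Definition~\ref{actualtimedef}, since each term $s_{Z((X_{k-1},X_k),k-1)}$ in $T(\tau_1(v_0))$ is, for the edge $e=(X_{k-1},X_k)$, one of the terms $s_0,\dots,s_{j_e-1}$ and each index value $0,1,\dots,j_e-1$ is used exactly once for that edge as $k$ ranges over the excursion. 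After that, the result is just monotone convergence / linearity of expectation, and the appeal to Theorem~\ref{proc.prop} to upgrade from $n=0$ and the fixed basepoint $v_0$ to all $n$ and all starting vertices.
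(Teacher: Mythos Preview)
Your proof is correct and follows essentially the same approach as the paper: both establish the sandwich $M\le \widetilde\tau_1(v_0)\le S\cdot M$ (the right inequality under strong impatience, the left inequality always, using $s_0=1$) and then appeal to Theorem~\ref{proc.prop}. You supply more explicit bookkeeping for why the upper bound holds edge-by-edge, but the argument is the same.
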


\begin{proof}
Let $v_0\in G$.
In the strongly impatient case, $$M\le \tau_1(v_0)\le SM,$$
and so $\E_{v_{0}} M\le \tau_1(v_0)\le S\E_{v_{0}} M.$ The lower estimate $M\le \tau_1(v_0)$ is, however,  always true, whether the impatience is strong or not. Hence the result follows from these observations and Theorem~\ref{proc.prop}.
\end{proof}

\begin{corollary}[Switching from null to positive. recurrence]
If the original walk $X$ is null-recurrent but $\E M<\infty$ holds, then strong impatience turns null-recurrence into positive recurrence (for~$\Ximp$).
\end{corollary}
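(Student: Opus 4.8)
The plan is to deduce this directly from Lemma~\ref{lemmacr}(i); the ``proof'' is essentially the verification that all of its hypotheses are in force, so I would keep it short. First I would observe that if $X$ is null-recurrent then in particular $X$ is recurrent, so that $\widetilde\tau_n(v_0)<\infty$ a.s.\ (since $T(n)<\infty$ for finite $n$) and, by Theorem~\ref{proc.prop}, $\Ximp$ is \emph{either} positive recurrent \emph{or} null recurrent, with the answer not depending on $n$ or $v_0$. Hence it is enough to produce one vertex $v_0$ for which $\E^{v_0}\widetilde\tau_0(v_0)<\infty$.

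Next I would invoke the strong-impatience assumption (A1), i.e.\ $1\le S:=\sum_{k\ge 0}s_k<\infty$. As in the proof of Lemma~\ref{lemmacr}, the actual length $\widetilde\tau_0(v_0)$ of an excursion from $v_0$ to $v_0$ is, in the strongly impatient regime, sandwiched between $M$ (each distinct edge contributes at least the first-crossing cost $s_0=1$) and $SM$ (the total actual time spent on any single edge is at most $S$). Taking $\E^{v_0}$ and using the hypothesis $\E M<\infty$ together with $S<\infty$ gives $\E^{v_0}\widetilde\tau_0(v_0)\le S\,\E^{v_0}M<\infty$, which is exactly the conclusion of Lemma~\ref{lemmacr}(i); thus $\Ximp$ is positive recurrent. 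The word ``turns'' in the statement then just records the hypothesis that $X$ was \emph{not} positive recurrent, so the impatience mechanism has genuinely changed the recurrence type.

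I do not expect a real obstacle: all the work is already done by the elementary comparison $M\le\widetilde\tau_0(v_0)\le SM$ underlying Lemma~\ref{lemmacr}. The only point worth a remark, and the thing I would be most careful about, is non-vacuity: one should be confident that the two standing hypotheses are jointly satisfiable, i.e.\ that there exist irreducible null-recurrent walks $X$ whose excursions nevertheless have an edge-support of finite expected size ($\E M<\infty$). A brief example, or a forward reference to one, would close this gap; everything else is bookkeeping.
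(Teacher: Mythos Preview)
Your proposal is correct and matches the paper's approach: the corollary is stated there without a separate proof, being an immediate consequence of Lemma~\ref{lemmacr}(i), and your write-up just unpacks that lemma's hypotheses. Your closing remark about non-vacuity is also handled in the paper, which points to the second part of Theorem~\ref{thmphase} (and the explicit $\Z^2$ example preceding it) as instances where $X$ is null-recurrent yet $\E M<\infty$.
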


This is the case, for example, for the random walk mentioned in the second part of  Theorem~\ref{thmphase} later.

\section{Impatient simple random walk on $\Z^d$, $d=1,2$, and generalization}\label{Imp.SRW.d12}
It turns out that for $d=1,2$, impatience cannot change the null-recurrent character of the simple random walk.

\begin{thm}\label{for.any.sequence}
The impatient simple random walks on $\Z^1$ and on $\Z^2$ are null recurrent for any sequence of passage times.
\end{thm}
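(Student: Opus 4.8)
The plan is to reduce the statement to Lemma~\ref{lemmacr}(ii). Since the simple random walk on $\Z^1$ and on $\Z^2$ is recurrent, we have $T(n)<\infty$ for every finite $n$, so the impatient walk is automatically recurrent; thus it suffices to prove that $\E M=\infty$, where $M$ is the number of distinct edges crossed by $X$ during one excursion from the origin. The key geometric observation is that $M$ is forced to be large whenever the excursion travels far: if, during the excursion, the walk reaches $\ell^1$-distance $n$ from $\mathbf 0$, then for each level $j\in\{1,\dots,n\}$ it must cross at least one edge joining $\{|x|_1=j-1\}$ to $\{|x|_1=j\}$, and these $n$ edges are pairwise distinct. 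Hence $M\ge n$ on that event. Writing $R$ for the maximal $\ell^1$-distance from $\mathbf 0$ visited during the excursion, this yields $\E M=\sum_{n\ge1}\P(M\ge n)\ge\sum_{n\ge1}\P(R\ge n)$, and it remains to bound $\P(R\ge n)$ from below in each dimension.

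For $d=1$ an excursion from the origin is one-sided, and by the gambler's-ruin identity the walk, once it has taken its first step, reaches level $n$ before returning to $0$ with probability $1/n$; hence $\P(R\ge n)=1/n$ and $\E M\ge\sum_{n\ge1}1/n=\infty$.

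For $d=2$ I would use that planar simple random walk is recurrent only ``logarithmically''. Concretely, the expected number of visits to $\mathbf 0$ before exiting the ball $B_n$ of radius $n$ (the truncated Green function $G_{B_n}(\mathbf 0,\mathbf 0)$) is of order $\log n$, equivalently the probability of exiting $B_n$ before returning to $\mathbf 0$ is $\asymp 1/\log n$; this follows from the logarithmic growth of the potential kernel $a(x)$ of simple random walk on $\Z^2$. Exiting $B_n$ forces the walk to reach Euclidean distance $n$, hence $\ell^1$-distance at least $n$, so $\P(R\ge n)\ge c/\log n$ for large $n$, and therefore $\E M\ge c\sum_{n\ge 2}1/\log n=\infty$. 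In both cases $\E M=\infty$, so Lemma~\ref{lemmacr}(ii) gives null recurrence of $\Ximp$, for an arbitrary sequence of passage times. (Note no hypothesis on monotonicity or summability of $(s_k)$ is used beyond what is already built into Lemma~\ref{lemmacr}.)

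The routine ingredients are the bound $M\ge n$ on $\{R\ge n\}$ and the $d=1$ computation. The one step requiring genuine input is the $d=2$ lower bound $\P(R\ge n)\gtrsim 1/\log n$, which is exactly the place where the borderline recurrence of the planar walk enters, via the asymptotics of the potential kernel (Spitzer; Stöhr). I expect this to be the main, though still standard, obstacle; everything else is elementary.
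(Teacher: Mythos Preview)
Your reduction to Lemma~\ref{lemmacr}(ii) via $\E M=\infty$ is exactly the paper's strategy, and for $d=1$ your argument is essentially identical to the paper's: both compute $\P(M\ge n)=1/n$ from the gambler's-ruin identity and conclude $\E M=\infty$.

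For $d=2$ the two arguments diverge. The paper does not invoke potential-kernel asymptotics at all; it simply remarks that $M$ in dimension two stochastically dominates $M$ in dimension one (e.g.\ via a coupling), hence inherits $\E M=\infty$ from the one-dimensional case. Your route instead establishes the quantitative lower bound $\P(R\ge n)\gtrsim 1/\log n$ from the logarithmic growth of the two-dimensional potential kernel (equivalently, $G_{B_n}(\mathbf 0,\mathbf 0)\asymp\log n$), and then uses your geometric inequality $M\ge R$. Both are correct. The paper's comparison is more elementary and self-contained---no input from potential theory is needed, only the $d=1$ calculation already in hand---while your version yields an explicit tail bound on $R$ (and hence on $M$), at the cost of importing a standard but nontrivial fact about planar SRW. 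Your level-crossing inequality $M\ge R$ is a clean addition that makes the link between range and edge count transparent in any dimension.
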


\begin{proof} Let $M$ be as before.

\underline{$d=1$:} We may assume that $v_0=\mathbf{0}$ and that the first step of the walk is to the right, $X_1=1$. The probability to reach vertex $m\ge 1$ before returning to the origin is  $1/m$, hence $\P(M\ge m)=1/m$, $m=1,2,\dots$. (Note that the number of distinct edges visited by the one-dimensional walk coincides with its maximum during the excursion). Consequently $\E M=\infty$ and by Lemma~\ref{lemmacr}  the impatient random walk is null-recurrent.

\underline{$d=2$:} one can easily show (e.g.\ by coupling) that the quantity $M$  for $d=2$ is stochastically larger than for $d=1$, so $\E M=\infty$ here and we again can use Lemma~\ref{lemmacr}.
\end{proof}

\begin{rema}
As the following example shows, one can easily construct a null recurrent random walk on $\Z^2$ (and similarly on $\Z^d$, $d\ge 3$) such that the strongly impatient random walk with the same transition probabilities has $\E M<\infty$ and hence is positive recurrent.
\end{rema}

Indeed, consider the following random walk on $\Z^2$.  Let $|(x,y)|:= \max(|x|,|y|)$ be the ``norm" of a vertex, let orbit $O_k=\{(x,y):  |(x,y)|=k\}$, $k=1,2,3,...$
that is, the squares with sides of length $2k$ parallel to the axes and the centre in the origin (please see the picture, each orbit has a distinctive colour).

Assume that the transition probabilities are the following: once the walker is on $O_k$ it goes to the previous orbit $O_{k-1}$ with probability  $\frac 23 \cdot 2^{-k}$,  or to the next orbit $O_{k+1}$ with probability $\frac 13\cdot 2^{-k}$, and with the remaining probability $1-2^{-k}$ it goes clock-wise staying on $O_k$ (this needs to be adjusted somewhat at the four corners of $O_k$).

If we consider the embedded process, defined as the norm of the point whenever it changes, then it is a positive recurrent random walk with probability $2/3$ going towards the origin, and $1/3$ going away from the origin.
Also, when the walker reaches orbit $O_k$, it stays there a geometric number of steps, on average $2^k$ steps.

The probability to reach orbit $O_k$ before returning to the origin is of order $2^{-k}$. Therefore, the average number of steps the walk makes is of order
$
\sum_{k=1}^{\infty} 2^{-k} \cdot 2^k=\infty.
$
At the same time, the average number of distinct vertices the walker visits before returning to the origin is bounded by 
$
\sum_{k=1}^{\infty} 2^{-k} \cdot 8k < \infty
$
since each orbit has $8k$ distinct vertices.
Consequently, the expected number of distinct vertices (and edges) visited by the walk is finite.

\setlength{\unitlength}{0.03in}
\begin{picture}(60,65)(-40,0)
\put(71,31){\bf 0} 
\put(70,30){\circle*{3}} 
\multiput(40,0)(0,10){7}{\line(1,0){60,0}}
\multiput(40,0)(10,0){7}{\line(0,1){60,0}}

\multiput(50,00)(10,0){6}{\vector(-1,0){10,0}}
\multiput(60,10)(10,0){4}{\vector(-1,0){10,0}}
\multiput(70,20)(10,0){2}{\vector(-1,0){10,0}}
\multiput(40,60)(10,0){6}{\vector(1,0){10,0}}
\multiput(50,50)(10,0){4}{\vector(1,0){10,0}}
\multiput(60,40)(10,0){2}{\vector(1,0){10,0}}
\multiput(100, 0)(0,10){6}{\vector(0,-1){0,10}}
\multiput(90, 10)(0,10){4}{\vector(0,-1){0,10}}
\multiput(80, 20)(0,10){2}{\vector(0,-1){0,10}}

\multiput(40,10 )(0,10){6}{\vector(0,1){0,10}}
\multiput(50,20)(0,10){4}{\vector(0,1){0,10}}
\multiput(60,30)(0,10){2}{\vector(0,1){0,10}}

\linethickness{0.025in}
\multiput( 40, 1)(0,10){6}{\line(0,1){7,0}}
\multiput(100, 2)(0,10){6}{\line(0,1){7,0}}
\multiput( 42, 0)(10,0){6}{\line(1,0){7,0}}
\multiput( 42,60)(10,0){6}{\line(1,0){7,0}}

\put(111,42){$O_3$}\put(110,43){\vector(-1,-1){9}}
\end{picture}
\\[0mm]

A generalization of the one-dimensional case is as follows. Let $X$ be a nearest-neighbour walk with the outward drift $b(x)\in (-1,1)$ at vertex $x\in \Z^1$, that is,
\begin{align*}
&b(0)=0;\\
&b(x):=\P(X_{n+1}=x+1\| X_n=x)-\P(X_{n+1}=x-1\| X_n=x),\ \text{for}\ x>0;\\
&b(x):=\P(X_{n+1}=x-1\| X_n=x)-\P(X_{n+1}=x+1\| X_n=x),\ \text{for}\ x<0;
\end{align*}
and for $m\ge 1$, define
\begin{align*}
B^r(m)&:=\sum_{x=1}^m \prod_{k=1}^x \frac{1-b(k)}{1+b(k)},\quad B^l(m):=\sum_{x=1}^m \prod_{k=1}^x \frac{1-b(-k)}{1+b(-k)},
\\
I^r(b)&:=\sum_{x=1}^{\infty} \frac{1}{1+B^r(x)},\quad I^l(b):=\sum_{x=1}^{\infty} \frac{1}{1+B^l(x)},
\end{align*}
so that $I^r(b),I^l(b)\in [0,\infty].$
\begin{thm}[Size of drift]\label{inward.drift.thm}
Assume that $X$ on $\Z^1$ is null recurrent.
\begin{enumerate}
\item[(a)]
If either $I^r(b)=\infty$ or $I^l(b)=\infty$ then $X^{\mathrm{imp}}$ on $\Z^1$ is null recurrent as well, for any sequence of passage times.
\item[(b)]
If  $I^r(b),I^l(b)<\infty$ then $X^{\mathrm{imp}}$ on $\Z^1$ is positive recurrent  whenever the walk is strongly impatient.
\end{enumerate}
\end{thm}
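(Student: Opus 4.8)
The plan is to reduce everything to Lemma~\ref{lemmacr} by computing $\E M$, where $M$ is the number of distinct \emph{edges} crossed by $X$ during an excursion from $\mathbf 0$ to $\mathbf 0$. On $\Z^1$ the number of distinct edges crossed in an excursion equals $M^r+M^l$, where $M^r$ (resp.\ $M^l$) is the maximal vertex reached on the positive (resp.\ negative) side, so it suffices to analyse $\E M^r$ and $\E M^l$ separately and, by symmetry of the two sides, to treat only $M^r$. First I would use the standard one-dimensional Markov-chain (gambler's ruin) formula for a birth--death chain with drift $b(\cdot)$: conditioned on the first step being to the right (probability $\tfrac{1+b(1)}2$, positive by Assumption~\ref{Non-degeneracy}), the walk reaches level $m$ before returning to $0$ with probability
\begin{align*}
\P(M^r\ge m)=\frac{1}{1+B^r(m-1)}\cdot\big(\text{const}\big),
\end{align*}
where the product ratios $\tfrac{1-b(k)}{1+b(k)}$ are exactly the ones appearing in the definition of $B^r$; more precisely the probability of reaching $m$ before $0$, starting from $1$, is $\big(\sum_{x=0}^{m-1}\prod_{k=1}^{x}\tfrac{1-b(k)}{1+b(k)}\big)^{-1}$ with the empty product equal to $1$. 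Summing over $m$ then gives $\E M^r$ comparable, up to bounded factors, to $\sum_{x\ge 1}\tfrac{1}{1+B^r(x)}=I^r(b)$ (the shift $m-1\mapsto x$ and the extra ``$1+$'' only change the sum by a bounded amount).

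Once this comparison $\E M^r\asymp 1+I^r(b)$ (and likewise $\E M^l\asymp 1+I^l(b)$) is in hand, both parts follow immediately. For part~(a): if $I^r(b)=\infty$ or $I^l(b)=\infty$, then $\E M=\E M^r+\E M^l=\infty$, so by Lemma~\ref{lemmacr}(ii) the impatient walk is null-recurrent, irrespective of the passage-time sequence (we are given $X$ itself is recurrent, so by the remark after the recurrence definitions $\widetilde\tau_n<\infty$ a.s.). For part~(b): if $I^r(b),I^l(b)<\infty$ then $\E M<\infty$; combined with the strong-impatience hypothesis $S<\infty$, Lemma~\ref{lemmacr}(i) yields positive recurrence of $\Ximp$. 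I should also record the mild point that null recurrence of $X$ forces $B^r(m)$ or $B^l(m)$ to diverge (otherwise the chain would be transient on that side or positive recurrent), which is what makes the dichotomy in the theorem the natural one, but this is only context, not needed for the implications.

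The main obstacle I anticipate is purely bookkeeping rather than conceptual: getting the gambler's-ruin hitting probabilities written in exactly the form that matches the $B^r, B^l$ sums, including correctly handling the first step (the conditioning on $X_1=+1$ vs.\ $X_1=-1$, which contributes only the bounded factors $\tfrac{1\pm b(1)}2$), the empty-product convention, and the index shift between ``reaching $m$'' and the summand $1/(1+B^r(x))$. One must be a little careful that ``distinct edges'' and ``maximum of the excursion'' really do coincide on $\Z^1$ — this is exactly the parenthetical remark already used in the proof of Theorem~\ref{for.any.sequence} — and that the two-sided excursion decomposes additively, so that finiteness of $\E M$ is equivalent to finiteness of \emph{both} $\E M^r$ and $\E M^l$, while \emph{infiniteness of either one} already gives $\E M=\infty$. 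None of these steps is deep, but they need to be lined up precisely so that the stated integrability conditions $I^r(b),I^l(b)<\infty$ translate cleanly into $\E M<\infty$.
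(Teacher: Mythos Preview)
Your approach is correct and essentially coincides with the paper's: both reduce to Lemma~\ref{lemmacr} by showing that $\E M$ (conditioned on the direction of the first step) equals $1+I^r(b)$ via the standard birth--death hitting-probability formula, which the paper phrases in the electrical-network language of resistors $R_x=\prod_{k=1}^x\frac{1-b(k)}{1+b(k)}$, so that $\P(M>x)=\frac{1}{1+B^r(x)}$. One small slip: since $b(0)=0$, the first step from the origin goes right with probability $\tfrac12$, not $\tfrac{1+b(1)}2$, but as you yourself anticipate this is just a harmless bookkeeping constant.
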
 
\begin{proof} 
Clearly, positive recurrence holds exactly when the expected return time is finite whether we condition it on $X_1=1$ or on $X_1=-1$. Assuming, for example, that $X_1=1$, we now link the finiteness of $I^r(b)$ with the finiteness of the expected return time; an analogous argument holds for the case when $X_1=-1$, concerning the finiteness of $I^l(b)$, so we omit it. Thus suppose $X_1=1$ from now on. By Lemma~\ref{lemmacr}, it is enough to show that 
\begin{align}\label{EM.I}
\E M=1+I^r(b),
\end{align}
in the sense that both sides are either finite and equal, or they are both infinite.
 
To compute $\E M$ for the nearest-neighbour one-dimensional random walk on $\Z$, note that now $M$ is the rightmost  lattice point reached in an excursion, and use the electrical network representation (see e.g.~\cite{RWEN}) with resistors $R_x$ located between $x$ and $x+1$ satisfying
$$
\frac{R_x}{R_{x-1}}=\frac{\P(X_{n+1}=x-1\| X_n=x)}{\P(X_{n+1}=x+1\| X_n=x)}
=\frac{\frac12-\frac{b(x)}2}{\frac12+\frac{b(x)}2}=\frac{1-b(x)}{1+b(x)}.
$$
Without the loss of generality, we may set $R_0:=1$. Therefore
$$
R_x=\prod_{k=1}^x \frac{1-b(k)}{1+b(k)};\quad B^r(x)=\sum_{y=1}^x R_y.
$$
Consequently,
$$
\P (M> x)=\P (x+1\ \mathrm{reached})=\frac{1}{1+R_1+\dots+R_{x}}=\frac{1}{1+B^r(x)},\ x\ge 1,
$$ yielding~\eqref{EM.I}.
\end{proof}

\section{Lamperti and Lamperti-type walks}
In Section~\ref{Imp.SRW.d12} we have seen that for the symmetric random walk  in dimensions one and two, impatience cannot turn null recurrence into positive recurrence. We are, therefore, going to consider certain random walks which are ``just barely null recurrent'',   and show that in those cases impatience can actually make them positive recurrent. Our models will be related to the ``Lamperti-walk'' (see~\cite{lamp}).

We first need a general result, presented in the next subsection.

\subsection{A general formula for nearest-neighbour  walk on $\Z_+$ }\label{general.formula}

Let $G:=\Z_+$ and consider a nearest-neighbour ageing or impatient random walk $X^*$ (i.e. $X^*=\Ximp$ or $X^*=\Xage$), with the passage times $\{s_i\}$, where the underlying random walk $X$ is positive recurrent.
For $m\ge 2$, let (with using $\P$ for $X$ too)
\begin{align*}
p_m&:=\P^1(X\text{ reaches $m$ before reaching }0),
\\
q_m&:=\P^m(X\text{ reaches $m+1$ before reaching }0).
\end{align*}
\begin{lemma}\label{eqmajor.lemma}
For $X^*$, the expected actual length (in time) of the first excursion from $0$, denoted by $\widetilde \tau=\widetilde \tau_1(0)$  is
\begin{align}\label{eqmajor}
\E_0\widetilde\tau=1+s_1+\sum_{m=2}^{\infty} p_m
\sum_{j=0}^\infty  q_m^{j+1} \left(s_{2j}+s_{2j+1}\right).
\end{align}
\end{lemma}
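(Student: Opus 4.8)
The plan is to decompose the excursion according to how far to the right it reaches and, on each "shell," count the contribution to the actual time edge by edge. Write $\widetilde\tau$ as a sum over the edges $e_m:=(m,m+1)$ (for $m\ge 0$) of the actual time the walk spends traversing $e_m$ during the first excursion from $0$. Since $X$ is a nearest-neighbour walk on $\Z_+$ started at $0$, the first step is across $e_0$, costing $s_0=1$; if $X$ ever steps to the right again from $1$ before absorption at $0$, there is an immediate return step across $e_0$ again, now costing $s_1$; and if the walk never goes beyond $1$ the excursion ends there. So the contribution of $e_0$ is always exactly $1$, and the contribution of $e_1=(1,2)$ plus everything to its right is what remains; more cleanly, I will argue that the term $1+s_1$ accounts for $e_0$ together with the "first and last" crossings of $e_1$ that are forced whenever $m=2$ is reached, and then sum the genuinely bulk contributions over $m\ge 2$.

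The key combinatorial input is the crossing structure of a nearest-neighbour excursion: for each $m\ge 1$, the number of times the undirected edge $e_m=(m,m+1)$ is traversed during the excursion is even, say $2N_m$, where $N_m$ is the number of up-crossings of $e_m$; and the successive traversals of $e_m$ cost $s_0, s_1, s_2, \dots, s_{2N_m-1}$ in that order (this uses that $s_k$ depends only on the number of \emph{previous} crossings of that specific edge, and that $X^*$ and $X$ share transition probabilities, so the passage-time mechanism does not affect which edges are crossed). Hence the actual time spent on $e_m$ equals $\sum_{k=0}^{2N_m-1} s_k = \sum_{j=0}^{N_m-1}(s_{2j}+s_{2j+1})$, and
\[
\E_0\widetilde\tau = \sum_{m=0}^{\infty}\E_0\!\left[\sum_{j=0}^{N_m-1}(s_{2j}+s_{2j+1})\right]
= \sum_{m=0}^{\infty}\sum_{j=0}^{\infty}(s_{2j}+s_{2j+1})\,\P_0(N_m\ge j+1),
\]
by Tonelli (all terms nonnegative). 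It therefore remains to identify $\P_0(N_m\ge j+1)$. Conditionally on the excursion reaching $m$ at all — which, by the strong Markov property and the definitions in the excerpt, happens with probability $p_m$ for $m\ge 2$ (and with probability $1$ for $m=1$, matching the first step) — each up-crossing of $e_m$ is followed, before the next up-crossing, by the event that the walk, started afresh at $m$, returns to $m+1$ before hitting $0$; by the strong Markov property these trials are i.i.d. with success probability $q_m$. Thus, given that $m\ge 2$ is reached, $N_m$ is geometric: $\P_0(N_m\ge j+1\mid \text{reach }m)=q_m^{\,j}$... and one must be slightly careful here, because reaching $m$ already guarantees one up-crossing of $e_{m-1}$, not of $e_m$; the correct bookkeeping is that $\P_0(N_m\ge j+1)=p_{m+1}\,q_m^{\,j}$ for the "interior" edges, which after reindexing $m\mapsto m-1$ and separating the $m=1$ (equivalently the leading $1+s_1$) term yields exactly \eqref{eqmajor}. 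Making this index shift transparent — and checking the two boundary edges $e_0,e_1$ by hand so that the closed form starts at $m=2$ with the stated $q_m^{j+1}$ rather than $q_m^{j}$ — is the one place where care is needed; everything else is Tonelli plus the strong Markov property.

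The main obstacle I anticipate is purely notational: lining up "number of previous crossings of this edge" (which controls the passage-time index, and is the same for $X$ and $X^*$) with "which excursion-reaching event has probability $p_m$ versus $q_m$," so that the geometric parameter lands on $e_m$ while the reaching probability refers to $e_{m-1}$, producing the $p_m q_m^{j+1}$ pattern. A clean way to avoid errors is to first write the identity with a generic edge $e_m$, reaching probability $\rho_m:=\P_0(e_m\text{ is up-crossed at least once})$, and per-trial success $q_m$, note $\rho_m=p_{m+1}$ for $m\ge 1$ and $\rho_0=1$, and only at the very end substitute, relabel, and peel off the $m\in\{0,1\}$ contributions as $1+s_1$.
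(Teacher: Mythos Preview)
Your approach is essentially identical to the paper's: decompose $\widetilde\tau$ edge by edge, use that each $e_m=(m,m+1)$ is crossed $2N_m$ times with $N_m$ conditionally geometric, and sum via Tonelli/strong Markov. The paper's proof is just a terser version of exactly this, writing the expected time on $(m,m+1)$ as $\sum_{j\ge 0} q_m^{j+1}(s_{2j}+s_{2j+1})$ and multiplying by the reaching probability $p_m$.

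One small correction to your boundary bookkeeping: the contribution of $e_0=(0,1)$ is \emph{not} ``always exactly $1$''; since the walk on $\Z_+$ has no neighbour of $0$ other than $1$, the edge $e_0$ is crossed exactly twice in the excursion (first step out, last step back), contributing $s_0+s_1=1+s_1$ in full. There is no need to borrow ``first and last crossings of $e_1$'' to make up the $1+s_1$, and your later plan to ``peel off the $m\in\{0,1\}$ contributions as $1+s_1$'' is therefore off: only $m=0$ gives $1+s_1$, while the $m=1$ term is a genuine interior contribution $p_1\sum_{j\ge 0} q_1^{j+1}(s_{2j}+s_{2j+1})$ with $p_1=1$. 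Using $p_{m+1}=p_m q_m$ (no reindex needed), you get directly
\[
\E_0\widetilde\tau \;=\; (1+s_1)\;+\;\sum_{m\ge 1} p_m\sum_{j\ge 0} q_m^{\,j+1}(s_{2j}+s_{2j+1}),
\]
which is the displayed formula up to where the sum begins; the starting index is immaterial for all subsequent uses in the paper.
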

\begin{proof} Since every time the walk traverses the edge $(m,m+1)$ rightwards, it must traverse it again before reaching $m$ again, it follows that
\begin{align*}
\E_0\widetilde\tau&=1+s_1+\sum_{m=2}^{\infty} p_m \E (\mathrm{time\ spent\ traversing \ }(m,m+1)\| X_0=m)
\\&=1+s_1+\sum_{m=2}^{\infty} p_m
\sum_{j=0}^\infty  q_m^{j+1} \cdot\text{time spent traversing back and forth for}\ (j+1)^{st}\ \text{time}
\\&=1+s_1+\sum_{m=2}^{\infty} p_m
\sum_{j=0}^\infty  q_m^{j+1} \left(s_{2j}+s_{2j+1}\right),
\end{align*}
as claimed.
\end{proof}
In fact, using the  method of electric networks just like in the proof of Theorem~\ref{inward.drift.thm} (see again~\cite{RWEN}), we are in possession of the useful formulae:
\begin{align}
p_m&=\frac{1}{1+R_1+\dots+R_{m-1}},\label{useful.form.p}
\\
q_m&=\frac{1+R_1+\dots+R_{m-1}}{1+R_1+\dots+R_{m-1}+R_m},\label{useful.form.q}
\end{align}
where $R_i, i\ge 1$ are the resistors of the electrical network corresponding to our random walk.

\subsection{Lamperti walk: $b(x)\asymp c/x$  on $\Z_+$}

As an application to Lemma~\ref{eqmajor.lemma}, we obtain a theorem concerning a case when short enough passage times can turn null-recurrence  into positive recurrence. But first we need the following statement (see e.g.\ Proposition~7.1 (i)--(iii) in~\cite{SHU}).
\begin{prop}
Let $X$ be a random walk  on $\Z_+$, with drift  $b(x)\asymp c/x$. Then $X$ is positive recurrent if $c<-1/2$,
null-recurrent for $c\in [-1/2,1/2]$, and transient for $c>1/2$.
\end{prop}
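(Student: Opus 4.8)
The plan is to treat this as a standard one-dimensional nearest-neighbour Markov chain on $\Z_+$ and to read off recurrence/positive recurrence from the behaviour of the resistors $R_x$ of the associated electrical network, exactly as was set up in the proof of Theorem~\ref{inward.drift.thm}. Writing $p^+(x)=\P(X_{n+1}=x+1\mid X_n=x)$ and $p^-(x)=1-p^+(x)$ for $x\ge 1$, the hypothesis $b(x)\asymp c/x$ means $p^-(x)/p^+(x)=(1-b(x))/(1+b(x))=1-2b(x)+O(b(x)^2)=1-\tfrac{2c}{x}+O(x^{-2})$. Hence $R_x=\prod_{k=1}^x p^-(k)/p^+(k)$, and taking logarithms gives $\log R_x=\sum_{k=1}^x\bigl(-\tfrac{2c}{k}+O(k^{-2})\bigr)=-2c\log x+C+o(1)$, so that $R_x\sim c_0\, x^{-2c}$ for a positive constant $c_0$.

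First I would record the two classical criteria in terms of the partial sums $\Sigma_m:=1+R_1+\dots+R_{m-1}=B^r(m-1)+1$. The chain on $\Z_+$ is recurrent if and only if $\Sigma_m\to\infty$ (equivalently $\sum_x R_x=\infty$), and, when recurrent, it is positive recurrent if and only if $\sum_{m}1/\Sigma_m<\infty$, where $1/\Sigma_m=p_m$ is the escape probability from $1$ to $m$; this is just the expected-return-time computation $\E_0\tau=1+\sum_{m\ge1}(\text{expected visits to }m)$ rephrased via $p_m$ as in Section~\ref{general.formula}. Alternatively one can cite the Lamperti/Foster criteria directly (e.g.\ via \cite{SHU}, Proposition~7.1), but I would prefer to make the excerpt self-contained by using the resistor description already in play.

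Now the computation is immediate from $R_x\sim c_0 x^{-2c}$. For $c>1/2$ we have $\sum_x R_x<\infty$, so $\Sigma_m$ converges and the walk is transient. For $c\le 1/2$ we have $\sum_x R_x=\infty$, so the walk is recurrent. To separate positive from null recurrence we estimate $\Sigma_m$: if $c<1/2$ then $\Sigma_m\sim c_1 m^{1-2c}$ with $1-2c>0$, hence $\sum_m 1/\Sigma_m\sim c_1^{-1}\sum_m m^{2c-1}$, which is finite exactly when $2c-1<-1$, i.e.\ $c<-1/2$; at $c=1/2$ one has $\Sigma_m\sim c_1\log m$, and $\sum 1/\log m=\infty$, so $c=1/2$ is null recurrent. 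Collecting cases: positive recurrent for $c<-1/2$, null recurrent for $c\in[-1/2,1/2]$, transient for $c>1/2$, which is exactly the claim.

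The only genuinely delicate point is the passage from the asymptotic $b(x)\asymp c/x$ to the asymptotics of $R_x$ and then of $\Sigma_m$: one must be careful that the $O(x^{-2})$ error in $\log\bigl((1-b(x))/(1+b(x))\bigr)$ is summable (it is), so that $R_x/x^{-2c}$ converges to a positive constant rather than merely being bounded above and below — this is needed so that $\sum 1/\Sigma_m$ and $\sum R_x$ have the same convergence behaviour as the pure power series, with no ambiguity at the boundary values $c=\pm1/2$. Everything else is bookkeeping with $p$-series. Since the statement is quoted as known (``see e.g.\ Proposition~7.1 (i)--(iii) in~\cite{SHU}''), in the paper itself I would simply cite that reference and omit the proof, but the above is the argument one would give if pressed.
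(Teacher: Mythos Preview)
The paper does not prove this proposition at all; it simply cites Proposition~7.1 (i)--(iii) of \cite{SHU}. So there is nothing to compare your argument to within the paper. Your instinct to supply a self-contained proof via the resistors $R_x\sim c_0 x^{-2c}$ is natural and the transience/recurrence dichotomy (via $\sum R_x$) is handled correctly.

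However, your positive-recurrence step contains a genuine error. You assert that the chain is positive recurrent iff $\sum_m 1/\Sigma_m<\infty$, justifying this as ``$\E_0\tau=1+\sum_{m\ge 1}(\text{expected visits to }m)$ rephrased via $p_m$''. But $p_m=1/\Sigma_m$ is only the \emph{probability} of reaching $m$ during an excursion, not the \emph{expected number} of visits to $m$. Once $m$ is reached, the walk returns to $m$ a geometric number of times before hitting~$0$; the correct expected number of visits is $p_m/(1-q_m)$ in the notation of Section~\ref{general.formula}, which works out (up to bounded factors from $p^\pm(m)\to 1/2$) to a constant times $1/R_m$, not $1/\Sigma_m$. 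Equivalently, the standard birth--death criterion is $\sum_x \pi_x<\infty$ with $\pi_x\asymp 1/R_x$.

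Your final answer comes out right only because a second, arithmetic error cancels the first: you write that $\sum_m m^{2c-1}$ converges exactly when $2c-1<-1$, ``i.e.\ $c<-1/2$'', but $2c-1<-1$ is $c<0$. With the correct criterion one instead gets $\sum_m 1/R_m\sim\sum_m m^{2c}$, which converges iff $2c<-1$, i.e.\ $c<-1/2$, and the boundary case $c=-1/2$ gives the divergent harmonic series, so null recurrence. Fix the criterion (and the arithmetic) and your sketch becomes a valid proof.
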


\begin{thm}\label{thmphase}
Let $X$ be a recurrent random walk  on $\Z_+$, with drift  $b(x)\asymp c/x$ (recurrence means~$c\le 1/2$). Furthermore, let the passage times satisfy $s_j\asymp j^{-\a}$, $\a>0$. Then  $\Ximp$ is positive recurrent if and only if $c<\min\left\{0,\frac{\a-1}{2}\right\}$.

In particular, when $c\in [-1/2,0)$, $X$  is null recurrent but $\Ximp$ is positive recurrent whenever $\a>1+2c$ (for example, when $\a>1$, i.e. the impatience is strong).
\end{thm}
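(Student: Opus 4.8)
The plan is to apply Lemma~\ref{eqmajor.lemma} together with the electrical-network formulas \eqref{useful.form.p}--\eqref{useful.form.q}, and reduce everything to estimating the growth of the partial sums $\Sigma_m := 1 + R_1 + \dots + R_{m-1}$, where $R_i = \prod_{k=1}^{i} \frac{1-b(k)}{1+b(k)}$. Since $b(x) \asymp c/x$, one has $\frac{1-b(k)}{1+b(k)} = 1 - 2b(k) + O(b(k)^2) = 1 - \frac{2c}{k} + O(k^{-2})$, so that $R_i \asymp \mathrm{const}\cdot i^{-2c}$ (this is the standard Lamperti computation; the $O(k^{-2})$ corrections converge and only affect the constant). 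Hence $\Sigma_m \asymp \mathrm{const}\cdot m^{1-2c}$ when $c < 1/2$, with the convention that $\Sigma_m$ grows like $\log m$ at $c = 1/2$. From \eqref{useful.form.p}--\eqref{useful.form.q} we then read off $p_m \asymp \mathrm{const}\cdot m^{2c-1}$ and $q_m = \Sigma_m/(\Sigma_m + R_m) = 1 - R_m/(\Sigma_m+R_m)$, and since $R_m/\Sigma_m \asymp \mathrm{const}/m$ we get $q_m = 1 - \frac{\rho}{m} + O(m^{-2})$ for an explicit constant $\rho = \rho(c) > 0$ (with $\rho = 1-2c$ up to the lower-order terms).

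Next I would insert these asymptotics into the formula
\begin{align*}
\E_0\widetilde\tau = 1 + s_1 + \sum_{m=2}^{\infty} p_m \sum_{j=0}^{\infty} q_m^{j+1}\left(s_{2j} + s_{2j+1}\right).
\end{align*}
Using $s_j \asymp j^{-\a}$, the inner sum $\sum_{j\ge 0} q_m^{j+1}(s_{2j}+s_{2j+1})$ is, up to constants, $\sum_{j\ge 1} q_m^{j} j^{-\a}$. The point is that $q_m$ is close to $1$: writing $q_m = 1 - \theta_m$ with $\theta_m \asymp \rho/m$, the sum $\sum_{j\ge 1}(1-\theta_m)^j j^{-\a}$ behaves like $\int_1^{\infty} e^{-\theta_m t} t^{-\a}\,dt$. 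For $\a \in (0,1)$ this integral is $\asymp \mathrm{const}\cdot \theta_m^{\a-1} \asymp \mathrm{const}\cdot m^{1-\a}$; for $\a = 1$ it is $\asymp \log(1/\theta_m) \asymp \log m$; and for $\a > 1$ it converges to a positive constant. So in all cases the inner sum is $\asymp \mathrm{const}\cdot m^{(1-\a)^+}$ up to logarithmic factors at $\a = 1$. Plugging in $p_m \asymp m^{2c-1}$, the tail of the outer sum is governed by $\sum_m m^{2c-1} \cdot m^{(1-\a)_+}$. This converges precisely when $2c - 1 + (1-\a)_+ < -1$, i.e. when $c < 0$ and simultaneously $c < \frac{\a-1}{2}$; that is, $c < \min\{0, \frac{\a-1}{2}\}$, which is exactly the claimed criterion. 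The boundary cases ($c=0$, or $\a=1$ forcing the comparison $c<\frac{\a-1}{2}=0$) need the logarithmic refinements but give divergence, so they are correctly excluded.

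For the "only if" direction I would run the same estimates as lower bounds: since $b(x) \asymp c/x$ gives two-sided control on $R_i$ and hence on $p_m$ and on $1-q_m$, the same comparison shows $\E_0\widetilde\tau = \infty$ whenever $c \ge 0$ or $c \ge \frac{\a-1}{2}$, so by Lemma~\ref{lemmacr}(ii) — or rather by Theorem~\ref{proc.prop} and the definition of null recurrence — the walk is null recurrent there. Finally, the last sentence of the statement is just the special case $c \in [-1/2,0)$: for such $c$ the underlying walk is null recurrent by the cited Proposition, while $c < \frac{\a-1}{2}$ is equivalent to $\a > 1 + 2c$, and since $1+2c < 1$ the condition $\a > 1$ suffices; moreover $\a>1$ means $\sum_j s_j < \infty$, i.e. strong impatience, so this also follows directly from the Corollary after Lemma~\ref{lemmacr} once one knows $\E M < \infty$, which is $\E M = 1 + I^r(b) = 1 + \sum_x \Sigma_{x+1}^{-1} \asymp \sum_x x^{2c-1} < \infty$ since $c<1/2$.

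The main obstacle I expect is the uniformity of the inner-sum estimate $\sum_{j\ge 1} q_m^j j^{-\a} \asymp m^{(1-\a)_+}$ as $m\to\infty$: one must control the transition region $j \sim 1/\theta_m \sim m$ carefully and make sure the $\asymp$ constants do not degenerate, especially near $\a = 1$ where logarithmic corrections enter and the inequality $c < \frac{\a-1}{2}$ becomes delicate. Turning the "$\asymp$" hypotheses on $b$ and $s$ into genuine two-sided bounds with explicit constants — rather than just leading-order asymptotics — is the bookkeeping that makes the argument rigorous rather than heuristic, but it is routine once the integral comparison above is set up.
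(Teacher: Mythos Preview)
Your proposal follows the same route as the paper: apply Lemma~\ref{eqmajor.lemma} together with~\eqref{useful.form.p}--\eqref{useful.form.q}, obtain $p_m\sim m^{2c-1}$ and $1-q_m\sim \mathrm{const}/m$ from the resistor asymptotics $R_m\sim m^{-2c}$, and then reduce the question to the convergence of $\sum_m m^{2c-1}\cdot h_\a(m)$ with $h_\a(m)=m^{(1-\a)_+}$ (or $\log m$ at $\a=1$). The paper estimates the inner sum $\sum_j q_m^j j^{-\a}$ by truncating at $j=m$ for the lower bound and by breaking the tail into blocks of length~$m$ with geometric decay for the upper bound, whereas you phrase the same estimate as an integral comparison with $\int_1^\infty e^{-\theta_m t}t^{-\a}\,dt$; these are equivalent, and your own ``main obstacle'' paragraph identifies exactly the uniformity issue the paper's block argument handles.

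One slip in your final paragraph: you justify $\sum_x x^{2c-1}<\infty$ by ``since $c<1/2$'', but that series converges only for $c<0$. You are in the regime $c\in[-1/2,0)$ so the conclusion is correct, but the stated reason is not; the condition $c<1/2$ would only give $2c-1\in(-1,0)$ and hence divergence. Also note that the paper's $\asymp$ means asymptotic equality (ratio $\to 1$), not comparable order, so your ``$R_i\asymp\mathrm{const}\cdot i^{-2c}$'' should be written with the paper's $\sim$.
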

\begin{rema}
In fact, $\Ximp$ is positive recurrent for any strongly impatient walk when $c<0$. (The proof is similar.)
\end{rema}

The diagram below summarizes the results in Theorem \ref{thmphase}.

%%% Diagram here
\vspace{0.5cm}
\setlength{\unitlength}{1cm}
\begin{picture}(13,6)
\put(4.8,3){$(0,0)$} 

\put(6,3){\circle*{0.2}}
\put(10,3){\circle*{0.2}}
\put(6,1){\circle*{0.2}}
\put(6,5){\circle*{0.2}}

\put(9.8,3.2){$1$}
\put(5.2,5.4){$\frac{1}{2}$}
\put(5.2,0.4){$-\frac{1}{2}$}

\put(6,3){\vector(1,0){9}}
\put(6,0){\vector(0,1){6}}
\put(15,2.5){$\a$}
\put(6.2,5.7){$c$}

\put(6,1){\line(2,1){4}}
\multiput(10,3)(0.5,0.25){10}{\line(2,1){0.4}}

\multiput(6.02,1)(-0.01,-0.00){9}{\line(2,1){3.95}}
\multiput(5.97,1)(0.01,0){7}{\line(0,-1){1}}
\multiput(10,2.97)(0,0.01){7}{\line(1,0){4.8}}

\put(14,4.5){$c=\frac{\a-1}{2}$}

\multiput(6,5)(-0.5,0){10}{\line(-1,0){0.25}}
\multiput(6,1)(-0.5,0){10}{\line(-1,0){0.25}}

\put(0.5,0.5){$X$ is positive recurrent}
\put(1,3){$X$ is null recurrent}
\put(1,5.5){$X$ is transient}

\put(6.2,3.2){weakly impatient \quad \quad \quad \quad strongly impatient}
\put(8,1){$\Ximp$ \ positive recurrent}

\end{picture}

\vspace{0.5cm}
\begin{proof}[Proof of Theorem~\ref{thmphase}.]
As before,  the resistors satisfy $$R_m=\prod_{k=1}^m \frac{1-b(k)}{1+b(k)},\ m\ge 1.$$ Using the Taylor approximation of $\log(1+z)$ for small $|z|$ along with the integral approximation of monotone series, it is an easy exercise to show that, defining $R_0:=1$, as $m\to\infty$,
$$
R_m\sim m^{-2c}\ \text{and}\ \sum_{i=0}^m R_i\sim m^{1-2c},
$$ 
provided $c\neq \frac{1}{2}$. (When, $c=\frac{1}{2}$, one has $\sum_{i=0}^m R_i\sim \log m$;  this case  will be treated at the end of the proof.) Now~\eqref{useful.form.p} implies that, as $m\to\infty$,
$$
p_m=\frac{1}{\sum_{i=0}^{m-1} R_i}\sim \frac{1}{m^{1-2c}}.
$$
Similarly, using \eqref{useful.form.q},
$$
a_m:=1-q_m=\frac{R_{m}}{\sum_{i=0}^m R_i}\sim  \frac{1}{m}.
$$
By~\eqref{eqmajor} and the monotonicity of $s_j$, in order to establish whether $\Ximp$ is positive recurrent or not, we thus have to analyse the finiteness of 
$$
\sum_{m=0}^{\infty} \frac{\mathsf{SUM}(m)}{m^{1-2c}},
\quad\text{where}\quad
\mathsf{SUM}(m):=\sum_{j=1}^{\infty} \left(1-a_{m}\right)^{j+1} \frac{1}{j^{\a}}.
$$ 
Clearly, if $\mathrm{const}$ denotes a  constant in $(0,m)$, then
$$
\sum_{j=1}^{\infty} \left(1-\frac{\mathrm{const}}{m}\right)^{j+1} \frac{1}{j^{\a}}\ge 
\sum_{j=1}^{m} \left(1-\frac{\mathrm{const}}{m}\right)^{j+1} \frac{1}{j^{\a}}\ge
\sum_{j=1}^{m} \left(1-\frac{\mathrm{const}}{m}\right)^{m+1} \frac{1}{j^{\a}},
$$
and since
$(1-\mathrm{const}/m)^{m+1}\uparrow e^{-\mathrm{const}}$ as $m\to\infty$, one has
$$
\sum_{j=1}^{m} \left(1-\frac{\mathrm{const}}{m}\right)^{m+1} \frac{1}{j^{\a}}
\sim   \sum_{j=1}^m \frac{1}{j^{\a}}\sim
h_{\a}(m):=
\begin{cases}
 m^{1-\a}, &\text{ if } \a<1;\\
 \log m, &\text{ if } \a=1;\\
 1, &\text{ if } \a>1.
\end{cases}
$$
Since $c_1/m\le a_m\le c_2/m$ for some $0<c_1<c_2$, it follows that 
$$
\mathsf{SUM}(m)\ge c_3 h_{\a}(m),
$$
with some $c_3>0$.

We now show that on the other hand 
\begin{align}\label{sumleftbound}
\mathsf{SUM}(m)\le c_4 h_{\a}(m),
\end{align}
holds with some other constant $c_4>0$ (possibly depending on $\a$). It will then follow that 
$$
\E\widetilde{\tau}\sim \sum_m \frac{h_\a(m)}{m^{1-2c}}=
\begin{cases}
\sum_m m^{2c-\a} & \text{ if } \a<1;\\
\sum_m \frac{\log m}{m^{1-2c}} & \text{ if } \a=1;\\
\sum_m m^{2c-1} & \text{ if } \a>1,
\end{cases}
$$
proving the statement of the theorem. To verify \eqref{sumleftbound}, denote  $\g:=e^{-c_1}\in (0,1)$; one has then
\begin{itemize}[leftmargin=*]
\item  if $\a>1$ then $\mathsf{SUM}(m)\le \sum_{j=1}^{\infty} j^{-\a}<\infty$; 
\item  if $\a=1$ then using Riemann-sum approximation for the function $f(x)=1/x$,
\begin{align*}
 \mathsf{SUM}(m)&\le 
\sum_{j=1}^{m} \frac{1}{j}
+
\sum_{j=m+1}^{\infty} \left(1-\frac{c_1}{m}\right)^{j+1} \frac1j
\le
\sum_{j=1}^{m} \frac 1j
+
\sum_{k=1}^{\infty} \g^{k}  \sum_{j=km+1}^{(k+1)m} \frac 1j\\ 
&\le \log m+\sum_{k=1}^{\infty} \g^{k}  \int_{km}^{(k+1)m} \frac 1j= \log m + \sum_{k=1}^{\infty} \log(1+1/k)\g^{k} \sim \log m = h_1(m);
\end{align*}

\item  if $\a<1$ then 
\begin{align*}
\mathsf{SUM}(m)&\le 
\sum_{j=1}^{\infty} \left(1-\frac{c_1}{m}\right)^{j+1} \frac{1}{j^{\a}}
\le 
\sum_{k=0}^{\infty} \g^{k}  \sum_{j=km+1}^{(k+1)m} \frac{1}{j^{\a}}
\le 
\sum_{k=0}^{\infty} \g^k  \frac{m}{(km+1)^{\a}}\\
&\le m^{1-\a} \sum_{k=0}^{\infty} \frac{\g^k}{k^\a}  \sim m^{1-\a}=h_{\a}(m),
\end{align*}
\end{itemize}
and thus~\eqref{sumleftbound} has been established.

\medskip
It remains to consider the case $c=\frac{1}{2}$. Now we have $p_m\sim \frac{1}{\log m}$ and~$\mathsf{SUM}(m)\ge O(1)$, so~$\E \tilde\tau \sim \sum_{m} p_m \mathsf{SUM}(m)\ge \sum_m \frac{1}{\log m}=\infty$ for any value of $\a\ge 0$, ruling out positive recurrence for~$\Ximp$. Since $X$ is recurrent, so is $\Ximp$, and thus $\Ximp$ is in fact null recurrent in this case.
\end{proof}

\subsection{Lamperti-type walk on $\Z_{+}$ with very small inward drift}
Consider a random walk on the non-negative integers, and assume now that the drift is much weaker than for the original Lamperti case, that is, that for large $x$,
\begin{align*}
b(x)\asymp\frac{D}{ x \log x},
\end{align*}
where $D<0$. 
While the corresponding walk $X$ is obviously null-recurrent for any $D\in\R$ (compare it with the classical Lamperti case), the following result shows that  there is a phase transition for the behaviour of  the impatient walk $\Ximp$.
\begin{thm} $\Ximp$ is
\begin{enumerate}
\item [(i)] null-recurrent for any $D\ge -1/2$ and any sequence of passage times;
\item[(ii)] positive-recurrent for any $D<-1/2$, provided it is strongly impatient.
\end{enumerate}
\end{thm}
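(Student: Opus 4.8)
The proof will run in parallel with those of Theorems~\ref{for.any.sequence} and~\ref{inward.drift.thm}: I would reduce the whole statement to the quantity $M$ (the number of distinct edges crossed by $X$ in one excursion away from $0$) via Lemma~\ref{lemmacr}, and compute $\E M$ by the electrical–network representation already used there. Since the walk on $\Z_+$ leaves $0$ to the right with probability one, $\{M\ge m\}$ is precisely the event that $X$ started at $1$ reaches $m$ before returning to $0$, so by \eqref{useful.form.p}
\[
\E M=\sum_{m\ge 1}\P^0(M\ge m)=\sum_{m\ge 1}p_m=\sum_{m\ge 1}\frac{1}{1+R_1+\dots+R_{m-1}},
\qquad R_m=\prod_{k=1}^m\frac{1-b(k)}{1+b(k)},\ \ R_0:=1.
\]
Thus everything comes down to deciding when this series converges.

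The asymptotics of the resistors are obtained exactly as in the proof of Theorem~\ref{thmphase}: Taylor–expanding $\log\frac{1-b(k)}{1+b(k)}=-2b(k)+O(b(k)^3)$ and using $\sum_{k=2}^m\frac1{k\log k}=\log\log m+O(1)$ gives $\log R_m=-2D\log\log m+o(\log\log m)$, so that $R_m=(\log m)^{-2D}$ up to a slowly varying factor; since moreover $R_m$ is eventually increasing (because $b(k)<0$ for large $k$), one gets $\sum_{i=0}^{m}R_i\asymp m(\log m)^{-2D}$ and hence $p_m=(\log m)^{2D}/m$ up to the same correction. Consequently $\E M\asymp\sum_m (\log m)^{2D}/m$, which (by the integral test, substituting $u=\log m$) converges if and only if $2D<-1$, i.e.\ $D<-1/2$. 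If $D\ge -1/2$ then $\E M=\infty$, and Lemma~\ref{lemmacr}(ii) gives that $\Ximp$ is null recurrent for \emph{any} sequence of passage times — recall that $X$ is itself null recurrent for every $D$, so $\Ximp$ is at least recurrent; this is part~(i). If $D<-1/2$ then $\E M<\infty$, and Lemma~\ref{lemmacr}(i) shows that a strongly impatient $\Ximp$ is positive recurrent, which is part~(ii).

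The delicate point, where essentially all the work lies, is to pin down $R_m$ (and therefore $p_m$) precisely enough at the threshold $D=-1/2$: there one must show $\sum_{i\le m}R_i=O(m\log m)$ — equivalently that the error in the approximation $b(k)\asymp D/(k\log k)$ contributes only an $O(1)$ term to $\log R_m$ — so that $p_m\ge c/(m\log m)$ and hence $\E M=\infty$. This is the exact analogue of the $c=1/2$ case in the proof of Theorem~\ref{thmphase}, handled by the same careful integral/Taylor estimates for the specific drift at hand; away from the threshold the conclusion is insensitive to such lower–order corrections, because the strict inequality between $2D$ and $-1$ is preserved under the $o(1)$ perturbation of the exponent.
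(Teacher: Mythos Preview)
Your approach is correct and coincides with the paper's: the paper simply invokes Theorem~\ref{inward.drift.thm} (itself proved via Lemma~\ref{lemmacr} and the identity $\E M=1+I^r(b)$), computes $R_m=\prod_{k\le m}\frac{1-b(k)}{1+b(k)}\sim(\log m)^{-2D}$ and hence $B^r(x)\sim x(\log x)^{-2D}$, and concludes $I^r(b)<\infty$ iff $D<-1/2$. You have merely inlined that theorem, expressing the same estimate through $p_m$ and $\E M$ directly; the paper does not treat the threshold $D=-1/2$ with any more care than you do.
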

\begin{proof}
We use Theorem~\ref{inward.drift.thm}. We have
$$
\prod_{k=1}^m \frac{1-b(k)}{1+b(k)}\sim 
\left(\log m\right)^{-2D} \quad \Longrightarrow \quad B^r(x)\sim x \left(\log x\right)^{-2D},
$$
hence $I^r(b)<\infty$ if and only if $D<-1/2$.
\end{proof}

\section{Expectation calculations  for hitting times and positive recurrence in one-dimension; the passage generating function}
In this section we perform some calculations concerning one-dimensional hitting times, and analyse one-sided positive/null recurrence for the walk.

We start with notation. We assume $d=1$, and $\sigma_n, n\in \Z$ will denote the hitting time by $X^{\mathrm{imp}}$ of $n$. Then $T_n=\sigma_{-n}\wedge \sigma_n$ is the exit time from $(-n,n)$ (i.e. the hitting time of $\{-n,n\}$), when starting at $-n\le x\le n$.
Introduce the shorthand $s_j^*:=s_{2j-2}+s_{2j-1}$ for $j\ge 1,$ and note that $\sum s_j^*=\sum s_j\in [1,\infty]$.
Let $u\in \mathbb Z^1$ and $v=u+h,\ h\ge 1$. With $n$, $u$ and $h$ (and thus $v$ too) fixed, define the following quantities:
\begin{enumerate}
\item For two-sided hitting times, define $$\rho_m^{(x)}:=\mathbb P_{x}(X\ \text{reaches}\ m\ \text{before}\ \pm n)=\mathbb P_{x}(T_n>\tau_{m}),$$ for $-n\le x,m\le n$.

\item For one-sided hitting times define $$r_m^{(x)}:=\mathbb P_{u+x}(X\ \text{reaches}\ u+m\ \text{before}\ v)=\mathbb P_{u+x}(\tau_v>\tau_{u+m}),$$ for all $x,m\in \mathbb Z^1$ and note that
$r_m^{(x)}=0$ for $m\ge h$ and $r_m^m=1$. 
\end{enumerate}

\begin{rema}
In a concrete situation, the quantities $\rho_m^{(x)},r_m^{(x)}$ are of course, computable as  ratios involving resistors. 
\end{rema}

\subsection{Passage generating function and passage radius}
The following notion will be useful  for impatient as well as ageing walks.

\begin{defi}
[Passage generating function and passage radius] 
The power series
$$
\phi(z):=\sum_{j=1}^{\infty} s_j^*z^j
$$
will be called the \emph{passage generating function}. In particular, $\phi(1)=\sum s_j^*=\sum s_j\in [1,\infty]$. The corresponding radius of convergence will be called the~\emph{passage radius} for $\phi$:
$$
R^{\mathrm{pass}}:=
\frac{1}{\limsup_{k}(s_k^*)^{1/k}}.
$$
\end{defi}

Of course, for the original motion $X$ ($s_j\equiv1$), one has $$\phi^{\mathrm{orig}}(z)=\frac{2z}{1-z},\ \ R^{\mathrm{pass}}=1.$$
\begin{rema}
[Strong impatience, super-ageing and $R^{\mathrm{pass}}$] Note the following.
\begin{itemize}
\item[(i)] It is clear that strong impatience implies that $R^{\mathrm{pass}}\ge 1$, while weak impatience implies that $R^{\mathrm{pass}}= 1.$ 

\item[(ii)] In the strongly impatient case, we can normalize the passage times so that $\sum s_j^*=\sum s_j=1$ (at the expense of speeding up time by a constant factor), and then $\phi$ is actually a probability generating function. This has the practical advantage that we can use well-known formulae for $\phi$ in the strongly impatient case.

\item[(iii)] In the ageing case, it is possible that $R^{\mathrm{pass}}=0$ (``super-ageing").
\end{itemize}
\end{rema}

\subsection{``Positive recurrence to the right'' (PRR) for impatient walk}

Let $X$ be a recurrent walk on $\mathbb Z^1$. 
\begin{defi}[One sided positive recurrence]
We say that  $X^{\mathrm{imp}}$ is  {\it positive recurrent to the right} if $\E_u \sigma_v<\infty$ for $u<v$ and {\it null recurrent  to the right} if $\E_u \sigma_v=\infty$ for $u<v$. We will show below (see Remark~\ref{easy.to.see}) that this definition does not actually depend on choice of~$u$ or~$v$.

The definition for the positive/null recurrence to the left (when $u>v$) is analogous.
\end{defi}

Our fundamental result about one-sided positive recurrence is as follows.

\begin{thm}[Criterion for PRR]\label{PRR.crit}
$X^{\mathrm{imp}}$ is positive recurrent to the right if and only if
\begin{align}\label{eq:crit}
    \sum_{m=-\infty}^{0} r_m^{(0)}\phi\left(r_{m-1}^{(m)}\right)<\infty.
\end{align}
\end{thm}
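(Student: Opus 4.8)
The plan is to express $\E_u\sigma_v$ as a sum, over all edges of $\Z$, of the expected actual time that $\Ximp$ spends traversing that edge before reaching $v$, and to evaluate each contribution. Since $X$ is recurrent we have $\sigma_v<\infty$ a.s., and because every increment of actual time is charged to the crossing of exactly one edge, $\sigma_v=\sum_e\theta_e$, where $\theta_e$ is the total actual time spent on $e$ before $\sigma_v$; as all $\theta_e\ge0$, Tonelli gives $\E_u\sigma_v=\sum_e\E_u\theta_e$. Edges lying weakly to the right of $v$ are never used, so the only contributions come from the finitely many \emph{corridor} edges $(u+j,u+j+1)$, $0\le j\le h-1$, and the \emph{left} edges $L_m:=(u+m-1,u+m)$, $m\le 0$.

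For a left edge $L_m$: as $X$ is nearest-neighbour and starts at $u\ge u+m$, its first crossing of $L_m$ (if any) is leftward, and every leftward crossing of $L_m$ must be followed, before $\sigma_v$, by a rightward one, since the walk must re-enter $[u+m,\infty)$ in order to reach $v$. Hence $L_m$ is crossed exactly $2N_m$ times, where $N_m$ counts the back-and-forth traversals, the $k$-th of which costs $s_{2k-2}+s_{2k-1}=s_k^*$, so that $\theta_{L_m}=\sum_{k=1}^{N_m}s_k^*$. Applying the strong Markov property of $X$ at the successive visits to $u+m$ --- which is legitimate precisely because the impatience mechanism does not alter the transition probabilities of $X$ --- we obtain $\P(N_m\ge1)=r_{m-1}^{(0)}=r_m^{(0)}r_{m-1}^{(m)}$ and $\P(N_m\ge k\mid N_m\ge k-1)=r_{m-1}^{(m)}$ for $k\ge2$, hence $\P(N_m\ge k)=r_m^{(0)}(r_{m-1}^{(m)})^{k}$ and
\begin{align*}
\E_u\theta_{L_m}=\sum_{k\ge1}s_k^*\,\P(N_m\ge k)=r_m^{(0)}\,\phi\bigl(r_{m-1}^{(m)}\bigr).
\end{align*}
Summing over $m\le0$ (note $r_0^{(0)}=1$) shows that the total expected time on all left edges equals the left-hand side of \eqref{eq:crit}.

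It remains to show the corridor contributes only finitely, and this is where impatience enters. The edge $(v-1,v)$ is crossed exactly once, costing $s_0=1$. For $0\le j\le h-2$ the walk certainly reaches $u+j+1$ (it must, to hit $v$), and the same bookkeeping shows $(u+j,u+j+1)$ is crossed $2L_j+1$ times, where $L_j$, the number of leftward crossings, satisfies $\P(L_j\ge k)=(r_j^{(j+1)})^{k}$ with $r_j^{(j+1)}<1$ strictly (gambler's ruin on the finite interval $[u+j,v]$); thus
\begin{align*}
\E_u\theta_{(u+j,u+j+1)}=s_0+\sum_{k\ge1}(r_j^{(j+1)})^{k}(s_{2k-1}+s_{2k})\le 1+\phi\bigl(r_j^{(j+1)}\bigr),
\end{align*}
where we used $s_{2k-1}+s_{2k}\le s_{2k-2}+s_{2k-1}=s_k^*$ because $\{s_n\}$ is non-increasing. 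Impatience also forces $s_k^*\le 2s_0=2$, whence $R^{\mathrm{pass}}\ge1$, so $\phi\bigl(r_j^{(j+1)}\bigr)<\infty$ for each $j$ since $r_j^{(j+1)}<1$; therefore the entire corridor sum is finite. Altogether,
\begin{align*}
\E_u\sigma_v=\Bigl(1+\sum_{j=0}^{h-2}\E_u\theta_{(u+j,u+j+1)}\Bigr)+\sum_{m=-\infty}^{0}r_m^{(0)}\,\phi\bigl(r_{m-1}^{(m)}\bigr),
\end{align*}
with the first bracket finite, so $\E_u\sigma_v<\infty$ if and only if \eqref{eq:crit} holds; since positive recurrence to the right means exactly $\E_u\sigma_v<\infty$, the theorem follows.

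The step requiring the most care is the exact geometric law of $N_m$ (and likewise of $L_j$): one must verify that once $X$ re-enters $[u+m,\infty)$ it restarts afresh from $u+m$, with the very same probability $r_{m-1}^{(m)}$ of producing another excursion reaching $u+m-1$ before $\sigma_v$, and that the altered clock in no way disturbs this renewal structure (it is reconstructed from the crossing counts only after the fact). A minor but essential point, easy to overlook, is the appeal to impatience (via $R^{\mathrm{pass}}\ge1$) that renders the corridor contribution finite; without it one would have to append a separate finite-corridor condition to the criterion.
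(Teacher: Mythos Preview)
Your proof is correct and takes essentially the same approach as the paper: both decompose $\E_u\sigma_v$ into the contribution from the edges to the left of $u$ (which yields exactly the criterion sum via the geometric law of the back-and-forth traversals) and from the finitely many corridor edges between $u$ and $v$ (shown to contribute finitely using that the $s_k$ are bounded in the impatient regime). Your write-up is, if anything, slightly more careful than the paper's---you invoke Tonelli explicitly, separate out the edge $(v-1,v)$, and justify the renewal structure of $N_m$ and $L_j$ via the strong Markov property---but the underlying argument is the same.
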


\begin{proof}
Let  $u<v\in\mathbb Z^1$ and define

$(I):=\E_u$(actual time spent between $u$ and $v$ until hitting $v$);

$(II):=\E_u$(actual time spent between $-\infty$ and $u$ before hitting $v$).

If $v-u=h$, then we can calculate $(I)$ by considering the expected local actual times on the edges between $u$ and $v$ as follows.
\begin{align*}
(I)
&=\sum_{m=1}^{h} \E_u (\text{actual time spent traversing the edge}\ (u+m-1,u+m))=s_{{0}}h\\
&+\sum_{m=1}^{h} \sum_{j=1}^{\infty} 
\text{(act.\ time spent trav.\ %\ left and right 
for $j^{th}$ occasion after $1^{st}$ upcrossing})
\cdot \P(\exists j^{th}\ \text{occasion})
\\ &
=s_{{0}}h+\sum _{m=1}^{h} \left( \sum _{j=1}^{\infty } \left( r^{(m)}_{m-1} \right) ^{j} \left( s_{{2\,j-1}}+s_{{2\,j}} \right)  \right).
\end{align*} 
In the impatient regime,  $s_j\le C$ for $j\ge 1$ with some $C>0$, and  as $r^{(m)}_{m-1}<1$ for all $m$, the last term is bounded by
$$
2C\sum _{m=1}^{h} \sum_{j=1}^{\infty}\left(r^{(m)}_{m-1}\right)^j=2C\sum _{m=1}^{h} \frac{r^{(m)}_{m-1}}{1-r^{(m)}_{m-1}}.
$$ 
Consequently, $(I)<\infty$.

Similarly, we can calculate $(II)$ by considering the expected local actual times on the edges which are ``below'' $u$.

\setlength{\unitlength}{1cm}
\begin{picture}(12,7)
\thicklines
\put(0.5,5){\vector(0,1){1}}
\put(0,5){$\mathbb Z_{+}$}
\put(11,0.5){time}
\put(0,1){$u+m-1$}
\put(0.5,2){$u+m$}
\put(1.5,6){$v$}\put(1.9,6){\line(1,0){0.2}}
\put(1.5,4){$u$}\put(1.9,4){\line(1,0){0.2}}
\put(2,1){\vector(1,0){10}}
\put(2,2){\line(1,0){10}}
\put(2,1){\line(0,1){5.5}}

\put(3,   0.5){$s_0$}
\put(4.5, 0.5){$s_1$}
\put(6, 0.5){$s_2$}
\put(7.5, 0.5){$s_3$}
\put(9, 0.5){$\dots$}

\thinlines
\put(3,2){\vector(1,-2){0.5}}
\put(6,2){\vector(1,-2){0.5}}
\put(4.5,1){\vector(1,2){0.5}}
\put(7.5,1){\vector(1,2){0.5}}

\qbezier(2,4)(2.2,5)(2.3,3)\qbezier(2.3,3)(2.4,4.5)(2.5,2.5)\qbezier(2.5,2.5)(2.6,3.5)(3,2)

\qbezier(2.9, 0.7)(2.5, 1)(3.1, 1.5)
\qbezier(5.9, 0.7)(5.5, 1)(6.1, 1.5)
\qbezier(4.9, 0.7)(5.2, 1)(4.9, 1.5)
\qbezier(7.9, 0.7)(8.2, 1)(7.9, 1.5)
\end{picture}

Hence,
\begin{align*}
    (II)
    &=\sum_{m=-\infty}^{0} \E_u (\text{actual time spent traversing the edge}\ (u+m-1,u+m))\\ 
    &=\sum_{m=-\infty}^{0} r_m^{(0)}\sum_{j=1}^{\infty} \text{(actual time spent traversing left and right for}\ j^{th}\ \text{occasion)}\\
    &\ \ \ \ \cdot \P(\exists j^{th}\ \text{ocassion})\\
    &=\sum_{m=-\infty}^{0} r_m^{(0)}\sum_{j=1}^{\infty}\left[r_{m-1}^{(m)}\right]^js_j^*=
    \sum_{m=-\infty}^{0} r_m^{(0)}\phi\left(r_{m-1}^{(m)}\right).
\end{align*}

Clearly, $\E_u \sigma_v =(I)+(II),$ and, since $(I)<\infty$, we are done.
\end{proof}

\begin{rema}[Consistency of the definition]\label{easy.to.see}
It is easy to see that the condition~\eqref{eq:crit} does not depend on the choice of $u$ or $v$ (for non-degenerate~$X$). For example, for a fixed  $u$, if we increase $h$, as this is shown in the above proof: only $(II)$ will change; $(I)$ is always finite.

A similar calculation shows that the condition is invariant under fixing $v$ and changing $u$.
\end{rema}

We can refine Theorem \ref{PRR.crit} as follows.

\begin{corollary}
Consider the following, simpler condition, involving the original walk only:
\begin{align}\label{simpler.cond}
 \sum_{m=-\infty}^{0} r_m^{(0)}<\infty.
\end{align}
\noindent (a)  Then \eqref{simpler.cond} is  a necessary condition for the PRR property for
$X^{\mathrm{imp}}$, no matter what the passage times are, as long as we rule out that $\lim_{m\to-\infty}r_{m-1}^{(m)}=0$.

\noindent (b) If either the impatience is strong, or  $r:=\sup_{m\in\mathbb{Z}}r_{m-1}^{(m)}<1$ then \eqref{simpler.cond} is  a sufficient condition for PRR for
$X^{\mathrm{imp}}$.
\end{corollary}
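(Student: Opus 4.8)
The plan is to deduce both parts directly from the exact formula established in Theorem~\ref{PRR.crit}, namely that $X^{\mathrm{imp}}$ is PRR if and only if $\sum_{m\le 0} r_m^{(0)}\phi(r_{m-1}^{(m)})<\infty$, by comparing the general term $r_m^{(0)}\phi(r_{m-1}^{(m)})$ with the simpler term $r_m^{(0)}$. The point is that $\phi(z)=\sum_{j\ge1}s_j^* z^j$ is a nonnegative, nondecreasing function on $[0,1)$ with $\phi(0)=0$, so everything reduces to controlling $\phi$ evaluated along the sequence $z_m:=r_{m-1}^{(m)}\in[0,1)$.

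For part~(b), suppose first that the impatience is strong, so $\phi(1)=\sum s_j^*<\infty$; then $\phi$ extends continuously to $[0,1]$ and $\phi(z)\le\phi(1)<\infty$ for all $z\in[0,1]$, whence $r_m^{(0)}\phi(r_{m-1}^{(m)})\le \phi(1)\,r_m^{(0)}$ and \eqref{simpler.cond} implies \eqref{eq:crit}. In the case $r:=\sup_m r_{m-1}^{(m)}<1$, use instead that $\phi$ is nondecreasing on $[0,1)$ together with $r<R^{\mathrm{pass}}$ (which holds since $R^{\mathrm{pass}}\ge1>r$ in the weakly impatient case, and $R^{\mathrm{pass}}\ge1>r$ also in the strong case): then $\phi(r_{m-1}^{(m)})\le\phi(r)<\infty$, so again $r_m^{(0)}\phi(r_{m-1}^{(m)})\le\phi(r)\,r_m^{(0)}$ and summability transfers. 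Either way \eqref{simpler.cond} is sufficient.

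For part~(a), I would argue in the contrapositive: assume \eqref{simpler.cond} fails, i.e.\ $\sum_{m\le0}r_m^{(0)}=\infty$, and show \eqref{eq:crit} fails as well. Since we rule out $\lim_{m\to-\infty}r_{m-1}^{(m)}=0$, there is $\eps>0$ and an infinite set of indices $m$ along which $r_{m-1}^{(m)}\ge\eps$; for those $m$ we have $\phi(r_{m-1}^{(m)})\ge\phi(\eps)\ge s_1^*\,\eps>0$ (using $s_1^*=s_0+s_1\ge s_0=1>0$, hence $\phi(\eps)\ge\eps>0$). Thus along this subsequence the terms of \eqref{eq:crit} dominate a positive multiple of $r_m^{(0)}$. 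The remaining subtlety is that divergence of $\sum_{m\le0}r_m^{(0)}$ need not be concentrated on the subsequence where $r_{m-1}^{(m)}\ge\eps$; to handle this one notes that $r_m^{(0)}$ is itself nonincreasing as $m\to-\infty$ (it is a hitting probability of a set that shrinks), and $r_{m-1}^{(m)}$ measures a one-step-type overshoot probability, so in fact $\liminf_{m\to-\infty}r_{m-1}^{(m)}>0$ once the ``$\lim=0$'' degenerate case is excluded, making the full sum over $m\le0$ of $\phi(r_{m-1}^{(m)})\,r_m^{(0)}$ bounded below by a constant times $\sum_{m\le0}r_m^{(0)}=\infty$.

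The main obstacle is precisely this last point in part~(a): converting the hypothesis ``$\lim_{m\to-\infty}r_{m-1}^{(m)}=0$ is excluded'' into a usable uniform lower bound $r_{m-1}^{(m)}\ge\eps$ for \emph{all} sufficiently negative $m$ (not merely infinitely many), so that the comparison $\phi(r_{m-1}^{(m)})\ge c>0$ holds termwise and the divergence of $\sum r_m^{(0)}$ forces divergence of \eqref{eq:crit}. If only a $\liminf>0$ is available rather than an eventual lower bound, one instead restricts the sum in \eqref{eq:crit} to the infinite index set where $r_{m-1}^{(m)}\ge\eps$ and must separately check that $\sum_{m\in \text{that set}} r_m^{(0)}=\infty$, which requires the monotonicity of $m\mapsto r_m^{(0)}$ together with a Cauchy-condensation-type argument; I would present the cleaner eventual-lower-bound version if the structure of the resistor ratios permits it, and fall back to the subsequence argument otherwise.
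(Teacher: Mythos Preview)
Your approach is essentially the paper's. For part~(b) your argument is identical: bound $\phi(r_{m-1}^{(m)})$ above by $\phi(1)<\infty$ (strong impatience) or by $\phi(r)<\infty$ (when $r<1$), and conclude termwise. For part~(a) the paper gives only one line --- ``$\phi(t)$ is bounded away from zero if $t>\eps>0$'' --- which is exactly the lower bound $\phi(r_{m-1}^{(m)})\ge c>0$ you are after.

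The extra discussion you give in part~(a) is where you and the paper diverge, and it contains a small gap. You correctly observe that the literal hypothesis ``we rule out $\lim_{m\to-\infty}r_{m-1}^{(m)}=0$'' only guarantees $\limsup>0$, not an eventual uniform lower bound; you then try to upgrade this by asserting ``so in fact $\liminf_{m\to-\infty}r_{m-1}^{(m)}>0$ once the $\lim=0$ degenerate case is excluded,'' but that step is not justified (excluding $\lim=0$ is compatible with $\liminf=0$ and $\limsup>0$). Your fallback subsequence argument also does not close the gap, since monotonicity of $m\mapsto r_m^{(0)}$ alone does not force $\sum_{m\in S}r_m^{(0)}=\infty$ for an arbitrary infinite $S$. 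The paper simply does not address this: its one-line proof implicitly reads the hypothesis as ``$r_{m-1}^{(m)}$ stays bounded away from $0$'' (equivalently $\liminf>0$), which immediately gives the termwise bound. So your concern is really about the phrasing of the corollary rather than a difficulty in the proof; under the intended reading of the hypothesis your argument and the paper's coincide.
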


\begin{proof}

\noindent (a) This follows from Theorem \ref{PRR.crit} and the fact that $\phi(t)$ is bounded away from zero if $t>\epsilon>0$.

\noindent (b) For strong impatience, $\phi\left(r_{m-1}^{(m)}\right)\le \phi(1)<\infty$ for all $m$, giving the assertion. If $r<1$, then $\phi\left(r_{m-1}^{(m)}\right)\le \phi(r)<\infty$ for all $m$, and we are done again.
\end{proof}

\begin{rema}[Original process] 
Taking $s_k=1, k\ge 1$, we obtain 
$$\sum_{m=-\infty}^{0}\frac{2r_m^{(0)}
r_{m-1}^{(m)}}{1-r_{m-1}^{(m)}}<\infty
$$
as the criterion for the positive recurrence of $X$, in which case the positive recurrence of $X^{\mathrm{imp}}$ follows immediately. So, to avoid this trivial situation, we can always assume that
$$
\sum_{m=-\infty}^{0}\frac{2r_m^{(0)}r_{m-1}^{(m)}}{1-r_{m-1}^{(m)}}=\infty
$$
\end{rema}

\begin{example}[SRW]
For simple random walk, no passage time sequence can lead to positive recurrence to the right.\footnote{We have already verified this with another method -- see Theorem \ref{for.any.sequence}.} Indeed, for $m<0$ we have $r_m^{(0)}=\frac{h}{h-m}$ and $r_{m-1}^{(m)}=\frac{h-m}{h-m+1}$. Thus,  for every $j\ge 1$,
$$
\sum_{m=-\infty}^{0} r_m^{(0)}[r_{m-1}^{(m)}]^j=\sum_{m=-\infty}^{0} h\frac{(h-m)^{j-1}}{(h-m+1)^j}=\infty,
$$
and so the quantity in~\eqref{eq:crit} after change in the order of summation equals
$$
\sum_j s_j^*\sum_{m=-\infty}^{0} r_m^{(0)}\left[r_{m-1}^{(m)}\right]^j=\infty.
$$
\end{example}
More generally, we have
\begin{corollary} 
If there exists a $j\ge 1$ for which
$$
\sum_{m=-\infty}^{0} r_m^{(0)}\left[r_{m-1}^{(m)}\right]^j=\infty,
$$
then $X^{\mathrm{imp}}$ is null recurrent to the right.
\end{corollary}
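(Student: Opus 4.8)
The plan is to read off the result directly from the criterion in Theorem~\ref{PRR.crit}. Recall that $\Ximp$ is positive recurrent to the right if and only if
$$
\sum_{m=-\infty}^{0} r_m^{(0)}\,\phi\!\left(r_{m-1}^{(m)}\right)<\infty,
$$
where $\phi(z)=\sum_{j\ge 1} s_j^* z^j$ and all coefficients $s_j^*=s_{2j-2}+s_{2j-1}$ are nonnegative. The strategy is to show that if the hypothesis holds for some fixed $j_0\ge 1$, then the left-hand side above is infinite, so the PRR criterion fails; since $X$ (hence $\Ximp$) is recurrent by assumption, failure of PRR means exactly null recurrence to the right.

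First I would note that $s_{j_0}^*\ge 0$, and in fact we may assume $s_{j_0}^*>0$ (if $s_{j_0}^*=0$, then $s_{2j_0-2}=s_{2j_0-1}=0$, which by monotonicity of the passage times forces $s_k=0$ for all $k\ge 2j_0-2$; but then, unless all $s_k$ vanish, only finitely many terms are positive, and one picks a smaller $j$ with $s_j^*>0$ — and if the hypothesis genuinely holds for an index with $s_j^*=0$ one instead observes that $r_{m-1}^{(m)}\le 1$ together with monotonicity of $\phi$'s partial sums still forces divergence through a comparison; the cleanest route is simply to note that if $\sum_m r_m^{(0)}[r_{m-1}^{(m)}]^j=\infty$ for some $j$, the same holds for every smaller index because $r_{m-1}^{(m)}<1$, so WLOG $j_0$ can be taken minimal, where necessarily $s_{j_0}^*>0$ provided not all $s_k$ vanish; the all-zero case is the infinitely impatient walk, excluded by $s_0=1$ and the standing conventions). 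Granting $s_{j_0}^*>0$, I would then use the termwise lower bound
$$
\phi\!\left(r_{m-1}^{(m)}\right)=\sum_{j=1}^{\infty} s_j^*\,\bigl[r_{m-1}^{(m)}\bigr]^{j}\ \ge\ s_{j_0}^*\,\bigl[r_{m-1}^{(m)}\bigr]^{j_0},
$$
valid for every $m\le 0$ since all summands are nonnegative. Multiplying by $r_m^{(0)}\ge 0$ and summing over $m\le 0$ gives
$$
\sum_{m=-\infty}^{0} r_m^{(0)}\,\phi\!\left(r_{m-1}^{(m)}\right)\ \ge\ s_{j_0}^*\sum_{m=-\infty}^{0} r_m^{(0)}\,\bigl[r_{m-1}^{(m)}\bigr]^{j_0}=\infty
$$
by hypothesis, so the criterion~\eqref{eq:crit} fails and $\Ximp$ is not positive recurrent to the right.

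Finally I would close the argument by invoking recurrence: since $X$ is a recurrent walk on $\Z^1$, so is $\Ximp$ (because $T(n)<\infty$ whenever $n<\infty$, as noted after the proof of Theorem~\ref{proc.prop}), hence $\sigma_v<\infty$ a.s.\ for $u<v$; combined with $\E_u\sigma_v=\infty$ this is precisely the definition of null recurrence to the right. I do not anticipate a serious obstacle here — the only mild care needed is the bookkeeping in the degenerate case $s_{j_0}^*=0$, which is handled by passing to a smaller index using $r_{m-1}^{(m)}<1$, and noting that the genuinely all-zero passage sequence is excluded from consideration.
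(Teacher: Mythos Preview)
Your proof is correct and follows the same approach the paper implicitly uses (the corollary is stated without proof, immediately after the SRW Example where exactly this change-of-summation lower bound appears). The digression around $s_{j_0}^*=0$ can be shortened: since $s_1^*=s_0+s_1\ge 1>0$ always and $r_{m-1}^{(m)}\in[0,1]$, divergence for any $j\ge 1$ implies divergence for $j=1$, so you may simply take $j_0=1$ from the start.
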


\subsection{Expected exit times -- two sided} 

In this section we consider  ageing random walks.\footnote{It is easy to see that in the impatient case, the expectation we study is always finite for any irreducible walk.} The first piece of  information about the speed we are aiming to obtain is the expected actual time to reach~$\pm n$ starting from the origin, that is, $\E_0 T_n$. Let $n\ge 2$ and $0\le m\le n-2$. Each edge $(m,m+1)$ can be crossed $0,1,2,\dots$ times before the walk reaches $\pm n$, unlike the cases for recurrence to the right, where the  parity of those times is fixed. Note also that the actual time spent on the edge $(n-1,n)$ is always either $0$ or $s_0$ and hence finite, as the walk can traverse it at most once before exiting $(-n,n)$. Similar statement holds for the edge $(-n, -n+1)$.

Once the walk started at $0$ reaches $m$, which happens with probability $\rho_m^{(0)}$, it can either  exit~$(-n,n)$ without ever crossing $(m,m+1)$ (in fact, it must be then $-n$), or cross this edge at least once -- the latter happens with probability $\rho_{m+1}^{(m)}$. If the walk reached $m+1$, a similar argument shows that  to cross $(m,m+1)$ once again, going leftwards, before exiting $(-n,n)$ has probability $\rho_{m}^{(m+1)}$. Consequently, the expected  actual time spent on the edge $(m,m+1)$ before exiting the interval equals 
\begin{align*}
{\ }&\rho_m^{(0)} \left[
\rho_{m+1}^{(m)} \left(s_0+\rho_{m}^{(m+1)}\left(s_1
+\rho_{m+1}^{(m)} \left(s_2+\rho_{m}^{(m+1)}\left(s_3
+\dots \right)\right)\dots \right)
\right)\right]
\\ &=
\rho_{m+1}^{(0)}\left[s_0+\gamma_m s_2 +\gamma_m^2 s_4+\gamma_m^3 s_6+\dots\right]
+
\rho_{m}^{(0)}\left[\gamma_m s_1+\gamma_m^2 s_3 +\gamma_m^3 s_5+\dots\right],
\end{align*}
where we defined
\begin{align}\label{eqgam}
\gamma_m:=\rho_{m+1}^{(m)} \rho_{m}^{(m+1)},
\end{align}
and  used that, by the Markov property, $\rho_m^{(0)} \rho_{m+1}^{(m)}=\rho_{m+1}^{(0)}$. Hence, if~$\widetilde {T}_n^{+}$ denotes the total actual time spent on the edges  $(0,1),(1,2)...(n-2,n-1)$ before exiting the interval, then 
\begin{align*}
\E_0 \widetilde {T}_n^{+}&=\sum_{m=0}^{n-2}
\left[ \rho_{m+1}^{(0)}\sum_{k=0}^\infty \gamma_m^k s_{2k}
+ \rho_{m}^{(0)} \sum_{k=1}^\infty \gamma_m^k s_{2k-1} \right].
\end{align*}
By the irreducibility of the walk,  $0<\rho^{(0)}_m<1$ and $0<\gamma_m<1$ for all relevant $m$, and so  we conclude that $\E_0 \widetilde {T}_n^{+}$ is finite if and only if
$$
\phi(\gamma_m)<\infty \quad \text{for all}\quad m=0,1,\dots,n-2.
$$
Similarly, let  $\widetilde {T}_n^{-}$ denote the  total actual time spent on the edges  $(-n+1,-n+2),...,(-1,0)$  before exiting the interval. Conducting an analogous argument,  one can compute $\E_0 \widetilde {T}_n^{-}$, which, taking into account that
$$
0\le T_n- \left({\widetilde {T}}_n^{-}+\widetilde {T}_n^{+}\right)\le 2 s_0,
$$
leads to the following criterion.

Recall Assumption~\ref{Non-degeneracy}.
\begin{thm}
Consider a one dimensional walk  and let $n\ge 2$. Then $\E_0  \widetilde {T}_n<\infty$ if and only if
$$
\max_{m=-(n-1),\dots,n-2}\phi(\gamma_m)<\infty
$$
where $\gamma_m$ is defined by~\eqref{eqgam}.
\end{thm}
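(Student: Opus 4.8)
The plan is to reduce the two-sided exit time $\widetilde T_n$ to the two one-sided pieces $\widetilde T_n^{+}$ and $\widetilde T_n^{-}$ that have already been analysed in the discussion preceding the statement. Since
$$
0\le T_n-\bigl(\widetilde T_n^{-}+\widetilde T_n^{+}\bigr)\le 2s_0<\infty,
$$
the random variable $T_n$ has finite expectation if and only if both $\E_0\widetilde T_n^{+}$ and $\E_0\widetilde T_n^{-}$ are finite; the leftover $T_n-(\widetilde T_n^{-}+\widetilde T_n^{+})$ accounts only for the at-most-one traversal of each of the two outermost edges $(n-1,n)$ and $(-n,-n+1)$, each contributing at most $s_0$ to the actual time. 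So the whole argument comes down to combining the two one-sided criteria.

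First I would record the formula for $\E_0\widetilde T_n^{+}$ derived above,
$$
\E_0\widetilde T_n^{+}=\sum_{m=0}^{n-2}\Bigl[\rho_{m+1}^{(0)}\sum_{k=0}^\infty \gamma_m^k s_{2k}+\rho_m^{(0)}\sum_{k=1}^\infty\gamma_m^k s_{2k-1}\Bigr],
$$
and note that, because the walk is non-degenerate (Assumption~\ref{Non-degeneracy}), we have $0<\rho_m^{(0)}<1$ and $0<\gamma_m<1$ for every relevant $m$. A finite sum of nonnegative terms is finite precisely when each term is finite, and each term is, up to the bounded nonzero prefactors $\rho_{m+1}^{(0)},\rho_m^{(0)}$, a pair of partial sums of $\sum_j s_j^*\gamma_m^{\,j}=\phi(\gamma_m)$ (recall $s_j^*=s_{2j-2}+s_{2j-1}$). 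Hence $\E_0\widetilde T_n^{+}<\infty$ iff $\phi(\gamma_m)<\infty$ for all $m=0,1,\dots,n-2$. Then I would carry out the mirror-image computation for the negative edges: exactly the same reasoning, with the indices reflected through the origin, shows $\E_0\widetilde T_n^{-}<\infty$ iff $\phi(\gamma_m)<\infty$ for all $m=-(n-1),\dots,-1$. Taking the conjunction of the two conditions and combining with the sandwich bound on $T_n$ gives that $\E_0 T_n<\infty$ iff $\phi(\gamma_m)<\infty$ for every $m\in\{-(n-1),\dots,n-2\}$, i.e. iff $\max_m\phi(\gamma_m)<\infty$.

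The genuinely substantive work — unfolding the nested conditional probabilities into the geometric series in $\gamma_m$ and using the Markov identity $\rho_m^{(0)}\rho_{m+1}^{(m)}=\rho_{m+1}^{(0)}$ — has already been done in the text preceding the theorem, so the remaining obstacle is minor: one must be careful that the reflected ``minus'' side really does produce the claimed index range $m=-(n-1),\dots,-1$ (equivalently, that the edge $(-n,-n+1)$, like $(n-1,n)$, is the one that is crossed at most once and thus gets absorbed into the $2s_0$ error term rather than contributing a $\phi(\gamma_m)$ condition), and that no edge is double counted. This is a matter of bookkeeping with the labelling convention for the edges and the definition $\gamma_m=\rho_{m+1}^{(m)}\rho_m^{(m+1)}$; once the ranges are matched up, the union of the index sets for $\widetilde T_n^{+}$ and $\widetilde T_n^{-}$ is exactly $\{-(n-1),\dots,n-2\}$ and the proof is complete.
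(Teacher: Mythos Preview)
Your proposal is correct and follows essentially the same approach as the paper: the paper itself derives the formula for $\E_0\widetilde T_n^{+}$, notes that non-degeneracy forces $0<\rho_m^{(0)}<1$ and $0<\gamma_m<1$, concludes the finiteness criterion for the plus side, then invokes the analogous computation for $\widetilde T_n^{-}$ together with the sandwich bound $0\le T_n-(\widetilde T_n^{-}+\widetilde T_n^{+})\le 2s_0$ to obtain the theorem. The only minor imprecision is your phrase ``partial sums of $\phi(\gamma_m)$'': the two series $\sum_{k\ge 0}\gamma_m^k s_{2k}$ and $\sum_{k\ge 1}\gamma_m^k s_{2k-1}$ are not literally partial sums of $\phi(\gamma_m)$ but rather its even and odd parts up to an index shift, which of course still gives the needed equivalence of finiteness.
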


As a consequence, we can see that if ageing is either very slow or very fast, then the behaviour of the original walk becomes irrelevant.
\begin{corollary}[Slow ageing and super ageing] Consider a one dimensional walk, and let $n\ge 2$. 
\begin{enumerate}
\item If $R^{\mathrm{pass}}= 1$ (``slow ageing''), then $\E_0 \widetilde {T}_n<\infty$ holds whatever the original walk is. 
\item If $R^{\mathrm{pass}}=0$ (``super-ageing'') then $\E_0 \widetilde {T}_n=\infty$ holds whatever the original walk is.
\end{enumerate}
\end{corollary}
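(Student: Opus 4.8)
The plan is to read both statements off directly from the theorem immediately preceding the corollary, whose criterion is that $\E_0\widetilde T_n<\infty$ if and only if $\phi(\gamma_m)<\infty$ for every $m$ in the \emph{finite} index set $\{-(n-1),\dots,n-2\}$ (which is nonempty since $n\ge 2$, e.g.\ it contains $m=0$). The only extra input needed is the strict two-sided bound $0<\gamma_m<1$ valid for each such $m$; this was already recorded just above the theorem, and it is precisely what Assumption~\ref{Non-degeneracy} (irreducibility of $X$, hence of $X$ killed upon exiting $(-n,n)$) provides: both $\rho_{m+1}^{(m)}$ and $\rho_m^{(m+1)}$ are probabilities of events that are neither impossible nor certain, so each lies in $(0,1)$ and therefore so does their product $\gamma_m=\rho_{m+1}^{(m)}\rho_m^{(m+1)}$.

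For part (1), I would argue as follows. If $R^{\mathrm{pass}}=1$, the power series $\phi(z)=\sum_{j\ge 1}s_j^* z^j$ converges absolutely on the open disk $\{|z|<1\}$. Since $0<\gamma_m<1$ for each of the finitely many relevant $m$, we get $\phi(\gamma_m)<\infty$ for all such $m$, hence $\max_m\phi(\gamma_m)<\infty$, and the theorem yields $\E_0\widetilde T_n<\infty$ with no further reference to the transition probabilities of $X$ beyond non-degeneracy. (The same reasoning works verbatim whenever $R^{\mathrm{pass}}\ge 1$, in particular in the strongly impatient case, so this half could be stated slightly more generally.)

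For part (2), $R^{\mathrm{pass}}=0$ means $\limsup_k (s_k^*)^{1/k}=\infty$, so for any fixed $z>0$ one has $\limsup_k (s_k^* z^k)^{1/k}=\infty$; in particular the terms $s_k^* z^k$ do not tend to $0$, so $\phi(z)=+\infty$. Applying this with $z=\gamma_m>0$ for a single relevant $m$ already forces $\max_m\phi(\gamma_m)=\infty$, and the theorem gives $\E_0\widetilde T_n=\infty$, again irrespective of the underlying walk.

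There is no genuinely hard step here; the entire content is the passage from the divergence/convergence radius of $\phi$ to the convergence of $\phi$ at the specific points $\gamma_m$. The one point that requires care — and the one place the hypothesis is really used — is the \emph{strict} inequality $0<\gamma_m<1$: without non-degeneracy some $\rho$ could equal $0$ or $1$, and then $\gamma_m$ could be $0$ (making $\phi(\gamma_m)=\phi(0)=0<\infty$ even in the super-ageing case) or $1$ (so $R^{\mathrm{pass}}=1$ would no longer guarantee $\phi(\gamma_m)<\infty$), and both conclusions could fail. It is also worth flagging explicitly that the index set is finite, so that "$\phi(\gamma_m)<\infty$ for all $m$" and "$\max_m \phi(\gamma_m)<\infty$" are the same statement — which is exactly the form in which the theorem is phrased.
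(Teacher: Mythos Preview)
Your proof is correct and matches the paper's approach: the paper gives no explicit proof of this corollary, presenting it simply as an immediate consequence of the preceding theorem together with the already-recorded fact that $0<\gamma_m<1$ under non-degeneracy. Your argument spells this out precisely, including the careful observation that strict inequality in $\gamma_m$ is exactly where the hypothesis is used.
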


\section{The spatial spread of the process}\label{speed.section}
Let $R_t$ denote the number of distinct edges crossed by $X^{\mathrm{imp}}$ up to actual time $t>0$.
If $G=\mathbb Z$, then $R_t=\max_{0\le s\le t}X^{\mathrm{imp}}_s-\min_{0\le s\le t}X^{\mathrm{imp}}_s$.

Assuming that the walk is strongly impatient with $\sum_k s_k=S$, clearly
$$
\liminf_{t\to\infty}\frac{R_t}{t}\ge \frac{1}{S}.
$$

\begin{problem}[Strongly impatient recurrent walk]
Assume that $X^{\mathrm{imp}}$ is strongly impatient and the classical random walk $X$ on $G$ is recurrent. Is it true for $X^{\mathrm{imp}}$ that
$$
 \lim_{t\to\infty}\frac{R_t}{t}=\frac{1}{S}?
$$
\end{problem}

\begin{problem}[Strongly impatient transient walk]
Assume that $X^{\mathrm{imp}}$ is strongly impatient and the classical random walk $X$ on $G$ is transient. Is it true for $X^{\mathrm{imp}}$ that
$$
 \lim_{t\to\infty}\frac{R_t}{t}=(0,\infty)?
$$ 
If so, what is the limit?
\end{problem}

\begin{problem}[Weakly impatient walk]
Assume that $X^{\mathrm{imp}}$ is weakly impatient. What is the asymptotic behaviour of $R_t$ as $t\to\infty$?
\end{problem}
\section{Comparison with classical ArcSine Law} 
Can one prove a generalized ArcSine Law? Recall that one way of formulating the classical ArcSine Law is that  the proportion of time spent on the right (left) by the walker has a limiting distribution. More precisely,  for $0<x<1$,
$$
\P(\text{the fraction of time units spent on the positive axis up to}\ n<x) \to \frac{2}{\pi} \arcsin \sqrt{x}
$$ 
as $n \to\infty$. (Another formulation is that if $k(n)$ denotes the  last return time to the origin up to~$2n$, then $k(n)/n$, has a limiting distribution.)
    
Now, in our case, the left-hand side depends on the passage times. Intuitively, if the random walk with the given passage times is impatient, then {\it the limiting distribution of the proportions is more balanced than in the classical case}. 

In fact, if the excursion time (between returns to the origin) has finite expectation (e.g. when~$M$ has finite expectation and passage times are strongly impatient, or the cases  computed in Section~\ref{general.formula}), then by the Renewal Theorem, the limit is completely balanced ($=1/2$)! 
\begin{problem}[Modified ArcSine Law]
What happens when~$X$ is simple random walk and the excursion time has infinite expectation? How will the passage times modify the ArcSine Law, making the limit more balanced?
\end{problem}
The following theorem may be considered as an initial step in this direction; it indicates that for strong enough impatience, a behaviour much more balanced than for the classical ArcSine Law is exhibited. 
\begin{thm}[Infinitely impatient RW]\label{inf.imp.thm}
On $\mathbb{Z}$, consider the ``infinitely impatient'' random walk, $X^{\mathrm{inf.imp}}$, that is, let $s_j=0, j\ge 1$. Let $R_n$ denote the time spent on the right axis up to time~$n\ge 0$, and~$L_n$ the time spent on the left axis up to time $n$. Then 
$$
\lim_{n\to\infty}\mathsf{Law}\left(\frac{R_{n}}{L_{n}+R_{n}}\right)=\mathsf {Uniform}([0,1]).
$$
\end{thm}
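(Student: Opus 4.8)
The plan is to reduce the statement to an exact distributional identity for the range of the underlying simple random walk $X$, and then prove that identity by a short induction.

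\textit{Step 1: reduction.} Since $s_0=1$ and $s_j=0$ for $j\ge1$, the actual time $T(k)$ elapsed after $k$ discrete steps equals the number of distinct edges crossed by $X$ up to step $k$, which for a nearest-neighbour walk on $\Z$ is precisely the range $M_k-m_k$, where $M_k:=\max_{0\le i\le k}X_i$ and $m_k:=\min_{0\le i\le k}X_i$. Moreover, in the representation $X^{\mathrm{inf.imp}}(s)=X_{\lfloor U(s)\rfloor}$, the single unit of actual time charged for the first traversal of a new edge is spent at the vertex from which that edge is first crossed; that vertex is $\ge0$ exactly when the new edge is some $(j,j+1)$ with $j\ge0$ (a new maximum) and $\le0$ exactly when it is some $(j,j+1)$ with $j\le-1$ (a new minimum). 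Hence, letting $\nu_n$ be the first discrete time at which $n$ distinct edges have been crossed (finite a.s.\ since $X$ is recurrent), the quantities $R_n,L_n$ in the statement agree, up to an additive error of at most $2$ coming from the two edges incident to the origin, with the number of rightward and of leftward range extensions accumulated by time $\nu_n$, namely with $M_{\nu_n}$ and $-m_{\nu_n}$; and $M_{\nu_n}+(-m_{\nu_n})=n$. It therefore suffices to show that $M_{\nu_n}$ is uniform on $\{0,1,\dots,n\}$: then $R_n/(R_n+L_n)=M_{\nu_n}/n+O(1/n)$ is, up to a vanishing error, uniform on an equispaced $(n+1)$-point grid in $[0,1]$, which converges weakly to $\mathsf{Uniform}([0,1])$.

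\textit{Step 2: a two-state Markov chain.} For $i\ge1$ put $\xi_i=1$ if the $i$-th range extension (which occurs at time $\nu_i$) is a new maximum and $\xi_i=0$ if it is a new minimum, and let $\mathrm{ph}_i\in\{+,-\}$ record whether the walk sits at the current maximum or the current minimum just after the $i$-th extension, so that $\{\mathrm{ph}_i=+\}=\{\xi_i=1\}$ and $M_{\nu_n}=\sum_{i=1}^n\xi_i$. At time $\nu_{i-1}$ the walk sits at one of the two current extremes, and the range has length $i-1$; adjoining the two neighbouring vertices, the direction of the next extension is decided by a gambler's-ruin problem on a path of $i+1$ edges, with the walk starting one edge from the barrier on the side of its current extreme. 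Consequently, for $i\ge2$, $\P(\xi_i=1\mid\mathrm{ph}_{i-1}=+)=\tfrac{i}{i+1}$ and $\P(\xi_i=1\mid\mathrm{ph}_{i-1}=-)=\tfrac1{i+1}$, while $\P(\xi_1=1)=\tfrac12$; in particular $(M_{\nu_i},\mathrm{ph}_i)_{i\ge1}$ is a Markov chain.

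\textit{Step 3: the induction.} I claim that for every $n\ge1$ and $0\le r\le n$,
\[
\P\big(M_{\nu_n}=r,\ \mathrm{ph}_n=+\big)=\frac{r}{n(n+1)},\qquad
\P\big(M_{\nu_n}=r,\ \mathrm{ph}_n=-\big)=\frac{n-r}{n(n+1)},
\]
the second identity following from the first by the reflection symmetry $x\mapsto-x$ of $X$. The case $n=1$ is immediate. For the inductive step $(n\ge2)$, note that $\{M_{\nu_n}=r,\ \mathrm{ph}_n=+\}$ forces $\xi_n=1$, hence $M_{\nu_{n-1}}=r-1$ (so the claim is trivial for $r=0$); conditioning on $\mathrm{ph}_{n-1}$ and using Step 2 together with the inductive hypothesis,
\[
\P\big(M_{\nu_n}=r,\,\mathrm{ph}_n=+\big)=\frac{r-1}{(n-1)n}\cdot\frac{n}{n+1}+\frac{(n-1)-(r-1)}{(n-1)n}\cdot\frac1{n+1}=\frac{r}{n(n+1)}
\]
for $1\le r\le n$, completing the induction. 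Summing over the phase gives $\P(M_{\nu_n}=r)=\tfrac1{n+1}$ for $0\le r\le n$, i.e.\ $M_{\nu_n}\sim\mathsf{Uniform}(\{0,1,\dots,n\})$, and the theorem follows.

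\textit{Main obstacle.} The genuine work is Step 1: translating the time-change definition of $X^{\mathrm{inf.imp}}$ into the combinatorial statement that $R_n$ counts rightward range extensions (including the harmless $O(1)$ boundary correction at the origin), and verifying that the auxiliary process is exactly the two-state chain of Step 2 with those gambler's-ruin transition probabilities. Once this is in place, Steps 2--3 are routine.
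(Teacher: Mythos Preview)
Your proof is correct and self-contained. Both you and the paper make the same reduction: the side (right/left) of successive range extensions of the underlying simple walk forms a two-state chain whose switching probability at step $i$ is $1/(i+1)$, by gambler's ruin on the interval $[m-1,M+1]$. The paper stops there and identifies this chain with the ``coin-turning'' model of Engl\"ander--Volkov, citing Theorem~1 of that paper for the Uniform$[0,1]$ limit. You instead prove directly, by the short induction in Step~3, the exact identity $\P(M_{\nu_n}=r)=1/(n+1)$ for every $n$, which is stronger than the limiting statement and requires no external reference. Your Step~1, making precise the $O(1)$ discrepancy between $R_n$ and the count of rightward extensions (arising only from the first traversals of the two edges incident to the origin), is also handled more carefully than in the paper. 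So the approaches coincide at the structural level, but yours trades the citation for an explicit computation and yields a cleaner finite-$n$ result.
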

\begin{proof}
Let $\mathcal{R}^n_{l,r}$ denote the event that the range of the walk up to actual time $n\ge 2$ is $[l,r]$ with $l\le 0, r\ge 0$. Just like in Section~\ref{general.formula}, it is easy to see that
$$
\P (X^{\mathrm{inf.imp}}\ \text{reaches}\ l-1\ \text{before}\ r+1\mid X_n^{\mathrm{inf.imp}}=r,\ \mathcal{R}^n_{l,r})=\frac{1}{r-l+2}=\frac{1}{\text{size of range}-2}.
$$
Of course, for $X^{\mathrm{inf.imp}}$, the size of the range increases by one at each unit (actual) time.

Therefore, identifying right and left with ``heads'' and ``tails'',  $R_n$ can be identified with the number of heads in the following experiment:
We first toss a fair coin. 
Then we turn it over with probability $1/3$, and with probability $2/3$ we do nothing. Next we turn it over with probability~$1/4$, etc.
Finally, in the $n$-th step we turn it over with probability $1/(n-1)$. 

Using this equivalent formulation, the claim follows from Theorem 1 in \cite{coin}, where a more general ``coin turning'' model is investigated. 
\end{proof}
\begin{rema}
L.~Breiman~\cite{Breiman} proved a generalization of the ArcSine Law in the 1960's, and this was picked up by D. Mason et al~\cite{weighted.sums} recently. The point is that one can have a nice limit even if the law of the excursion is different from the classical one for simple random walk. (In~\cite{weighted.sums},  take~$X_i$ to be a random sign and $Y_i$ the excursion time for the $i$th excursion.)
\end{rema}

\section{Space-dependent impatience}
Here we modify the definition of impatience given in Section~\ref{Intro}. Suppose now that the passage times for an edge $e$ do not depend on the number of times the edge has been crossed, but rather on {\em the location} of this edge in the graph $G$;
thus $s_0(e)=s_1(e)=\dots =s(e)$, following our definition of the passage times. As before, fix some specific vertex $v_0$ of the graph and call it {\em the origin}. For a vertex $v$ in $G$ let the distance from $v_0$ to $v$, denoted by $\Vert v\Vert\in\{0,1,2,\dots\}$, be the number of edges on the shortest path connecting $v_0$ and $v$, and for an edge $e=(v_1,v_2)$ let $\Vert e \Vert=\min\{\Vert v_1\Vert,\Vert v_2\Vert\}$. 

Let $X$ be a unit time random walk on $G$, i.e.\ a Markov chain whose transitions are restricted to the edges of $G$. It is then conceivable that while $X_n$ is null-recurrent, $\Ximp$ may still be positive recurrent, provided $s(e)\to 0$ sufficiently quickly as $\Vert e\Vert\to \infty$.

For any two vertices $v$ and $u$ let us define 
\begin{align*}
p(u,v)&=\P^u(X\text{ hits $v$ before ever returning to $u$})\\
&=\P^u(\exists n\ge 1: \ X_1\notin\{u,v\},\dots,
X_{n-2}\notin\{u,v\}, X_{n-1}\notin\{u,v\}, X_n=v)\\
&=\P^u(\sigma_v<\sigma_u)
\end{align*}
where $\sigma_u:=\min\{k\ge 1\mid X_k=u\}$  for  $u$ ($\sigma_v$ is defined analogously).
\begin{assumption}\label{Stas.assum}
We assume the following  about the random walk $X$.

\begin{itemize}
  \item (Uniform ellipticity)   There is a universal constant $\eps>0$ such that for any edge $e=(v_1,v_2)$ in the graph $\P(X_{n+1}=v_2\| X_n=v_1)\ge \eps$, $\P(X_{n+1}=v_1\| X_n=v_2)\ge \eps$.
  \item (Return symmetry)  
  There is a universal constant $\rho\ge 1$ such that for any $v$ in the graph
  $$
  \rho^{-1} \, p(v,v_0)\le p(v_0,v)\le \rho\, p(v,v_0).
  $$
\end{itemize}
\end{assumption}

\begin{rema}\label{rem11}
Observe that:
\\
(a) uniform ellipticity implies that the graph is of uniformly bounded degree, i.e.\ there is $D\ge 1$ such that each vertex of the graph has at most $D$ edges coming out of it, and that there are no oriented edges on $G$;
\\
(b)
return symmetry implies that the underlying random walk cannot be positive recurrent.
\end{rema}
Note that part (b) of the above remark follows from the following observation. For each $v\ne v_0$, the walk starting at~$v_0$ has the probability $p(v_0,v)$ of hitting $v$ before returning to the origin~$v_0$. However, after reaching $v$, the walk makes a geometric number of returns to $v$ itself before coming back to~$v_0$. The expected number of such returns, including the very first visit, is $\frac 1{p(v,v_0)}$. Thus, the total expected number of vertices visited by the walk (with multiplicity) equals $\sum_{v\in G} \frac {p(v_0,v)}{p(v,v_0)}$ which is infinite since each term is at least $\rho^{-1}$. Hence the expected number of steps the walk makes before returning to $v_0$ is also infinite.

Later we will show that SRW both on $\Z$ and $\Z^2$ satisfy the above assumptions.

\begin{thm}\label{thm2ass}
Let $X$ be a null-recurrent random walk on $G$, satisfying Assumption~\ref{Stas.assum},  with $X_0=v_0$. Then $\Ximp$ is positive recurrent if and only if 
$$
\sum_{e\in E(G)} s(e)<\infty.
$$
\end{thm}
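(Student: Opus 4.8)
The plan is to reduce positive recurrence of $\Ximp$ to the finiteness of the expected actual length of a single excursion from the origin, and then to show this expectation is comparable — with constants depending only on $\eps$, $\rho$ and the degree bound $D$ from Remark~\ref{rem11}(a) — to $\sum_{e\in E(G)}s(e)$. First I would note that in the space-dependent model the cost of traversing an edge $e$ is always $s(e)$, with no dependence on the history of $X$, so the successive actual excursion lengths of $\Ximp$ between visits to $v_0$ are i.i.d.; in particular $\E^{v_0}\widetilde\tau_n(v_0)$ is finite for one $n$ iff it is finite for all $n$, and together with Theorem~\ref{proc.prop} (which applies here, since the passage times along any fixed edge form a constant, hence monotone, sequence, and $X$ is recurrent) this shows $\Ximp$ is positive recurrent iff $\E^{v_0}\widetilde\tau_0(v_0)<\infty$. (Recurrence of $\Ximp$ is automatic, as $X$ is recurrent and $T(n)<\infty$ for finite $n$.) Writing $N(e)$ for the number of times $X$ traverses $e$, in either direction, during the first excursion $X_0=v_0,\dots,X_{\tau_0(v_0)}=v_0$, the definition of actual time yields
\[
\widetilde\tau_0(v_0)=\sum_{k=1}^{\tau_0(v_0)} s(\{X_{k-1},X_k\})=\sum_{e\in E(G)}s(e)\,N(e),
\]
and hence $\E^{v_0}\widetilde\tau_0(v_0)=\sum_{e\in E(G)}s(e)\,\E^{v_0}N(e)$ by Tonelli.

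The main work is to bound $\E^{v_0}N(e)$ above and below by positive constants, uniformly in $e$. I would use two elementary facts. (i) For any vertex $v\neq v_0$ the expected number of visits to $v$ during the first excursion equals $p(v_0,v)/p(v,v_0)$ — the excursion reaches $v$ with probability $p(v_0,v)$, and after each visit to $v$ the walk returns to $v$ before hitting $v_0$ with probability $1-p(v,v_0)$, so the number of visits is geometric with mean $1/p(v,v_0)$ — and by return symmetry this ratio lies in $[\rho^{-1},\rho]$. (This refines the computation in the paragraph after Remark~\ref{rem11}, where the sum of these ratios over all $v$ was considered.) (ii) For an edge $e=\{v_1,v_2\}$, conditioning on the current state and using the Markov property, the expected number of directed $v_1\to v_2$ crossings during the excursion equals $\P(X_{n+1}=v_2\mid X_n=v_1)$ times the expected number of visits to $v_1$, and by uniform ellipticity $\P(X_{n+1}=v_2\mid X_n=v_1)\in[\eps,1]$. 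Applying (i) and (ii) at an endpoint of $e$ different from $v_0$ (one always exists) gives $\E^{v_0}N(e)\ge\eps\rho^{-1}$; applying them at both endpoints gives $\E^{v_0}N(e)\le 2\rho$. The at most $D$ edges incident to $v_0$ need no separate estimate beyond $N(e)\le 2$, since such an edge can be crossed only on the first and on the last step of an excursion.

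Combining these bounds,
\[
\eps\rho^{-1}\sum_{e\not\ni v_0}s(e)\ \le\ \E^{v_0}\widetilde\tau_0(v_0)\ \le\ 2\rho\sum_{e\not\ni v_0}s(e)+2\sum_{e\ni v_0}s(e),
\]
and since $\sum_{e\ni v_0}s(e)<\infty$ always (at most $D$ terms, each finite), we obtain $\E^{v_0}\widetilde\tau_0(v_0)<\infty$ if and only if $\sum_{e\in E(G)}s(e)<\infty$; with the reduction of the first paragraph this proves the theorem. I expect the main obstacle to be the careful verification of fact (i) — the geometric-returns bookkeeping and the exact meaning of ``visits during the excursion'' — and the check that it is precisely return symmetry (not reversibility or something stronger) that makes the per-edge bound uniform in $e$; once these are in place the remainder is routine, and in particular neither implication uses reversibility of $X$.
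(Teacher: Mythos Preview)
Your proposal is correct and follows essentially the same route as the paper: write $\E^{v_0}\widetilde\tau_0(v_0)=\sum_e s(e)\,\E N(e)$, then show $\E N(e)$ is uniformly bounded above and below by sandwiching it between multiples of the expected vertex visits $p(v_0,v)/p(v,v_0)\in[\rho^{-1},\rho]$ via uniform ellipticity. The only cosmetic differences are that the paper does not single out edges incident to $v_0$ (it implicitly uses $\xi(v_0)=1$, which still lies in $[\rho^{-1},\rho]$), and that you are more explicit about the reduction to a single excursion; your anticipated ``main obstacle'' in verifying fact~(i) is exactly the computation the paper carries out in one line.
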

\begin{proof}
First of all, it is clear that
$$
\E\tilde\tau=\E \sum_{e\in E(G)} \xi(e) s(e)
= \sum_{e\in E(G)} s(e) \cdot \E \xi(e) 
$$
where $\xi(e)$ is the number of times  edge $e$ is crossed (in either direction) prior to $\sigma_{v_0}$; formally, for $e=(u_1,u_2)$,
$$
\xi(e)=\sum_{n=0}^{\sigma_{v_0}-1} \left[\mathbf{1}_{\{X_n=u_1,X_{n+1}=u_2\}}
+\mathbf{1}_{\{X_n=u_2,X_{n+1}=u_1\}}\right].
$$ 
Consequently, to establish the statement of the theorem it suffices to show that $\E \xi(e)$ are bounded above and below by some positive constants not depending on $e$. 

To this end, observe first that $p(v_0,v)\to 0$  as $\Vert v\Vert \to \infty$, since $X$ is recurrent. Indeed, for~$n\ge 1$, one has
$$
p(v_0,v)\le \P^0(\sigma_v\le n)+\P^0(\sigma_{v_{0}}\ge n),
$$
where $\P^0(\cdot):=\P(\cdot\mid X_0=v_0)$. Now $\P^0(\sigma_{v_{0}}\ge n)$ tends to zero as $n\to\infty$ and $\P^0(\sigma_v\le n)=0$ whenever $\Vert v\Vert >n.$

Next, note  that the number of vertices at distance at most $k$ from $v_0$ is bounded above by $\sum_{i=1}^k d(d-1)^{i-1}<\infty$, so we can safely assume from now on that $p(v_0,v)$ is small.

Let $u\in G$  and with a slight abuse of notations let $\xi(u)=\sum_{n=1}^{\sigma_{v_0}} 1_{X_n=u}$ be the number of times $u$ is visited before $\sigma_{v_0}$. Let $e=(u_1,u_2)\in E(G)$. Since, after each visit of $u_i$, $i=1,2$, the walk crosses $e$ with probability at least $\eps$, and to cross $e$ it {\em has to} visit one of the endpoints, we have
$$
\eps\cdot \E\left[\xi(u_1)+\xi(u_2)\right]  \le \E \xi(e)\le \E\left[\xi(u_1)+\xi(u_2)\right].
$$
(In fact, the right inequality holds even without expectation signs!) Therefore, if we show that the ~$\E\xi(u)$ are bounded above and below, uniformly over $u\in G$, then we are done.

However, once vertex $u$ is reached (and this happens with probability $p(v_0,u)$) the number of returns to it before hitting $v_0$ is geometric with success probability $p(u,v_0)$ and mean  $\frac{1}{p(u,v_0)}$, and consequently,
$$
\E \xi(u)=  [1-p(v_0,u)]\cdot 0+p(v_0,u)\cdot \frac{1}{p(u,v_0)},
$$
which belongs to the interval $[\rho^{-1},\rho]$ for all $u$, because of the second part of Assumption~\ref{Stas.assum} (see also the argument after Remark~\ref{rem11}). The theorem is thus proven.
\end{proof}

Recall that the graph $G$ is called {\it transitive} if,  viewed from any vertex $v$ in $G$, it is isomorphic to $G$ viewed from $v_0$.
\begin{prop}\label{prop2ass}
Let  $X$ be a recurrent simple\footnote{I.e. $X$ jumps to each neighbour with the same probability.} random walk on the transitive graph $G$. Then Assumption \ref{Stas.assum} is satisfied.
\end{prop}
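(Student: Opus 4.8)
The plan is to verify the two clauses of Assumption~\ref{Stas.assum} in turn: the uniform ellipticity is immediate from the combinatorics of a transitive graph, while the return symmetry reduces to reversibility of the simple random walk and will in fact hold with the optimal constant $\rho=1$.

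First I would extract the structural input. By definition, ``$G$ transitive'' means that for every vertex $v$ there is an automorphism of $G$ carrying $v_0$ to $v$; in particular all vertices of $G$ have the same degree. Since the simple random walk (``jumps to each neighbour with the same probability'') is only defined when $G$ is locally finite, this common degree $d$ is finite, so $G$ is $d$-regular with $1\le d<\infty$. Consequently $\P(X_{n+1}=v_2\mid X_n=v_1)=1/d$ along every edge $(v_1,v_2)$, so uniform ellipticity holds with $\eps:=1/d$; this is also consistent with Remark~\ref{rem11}(a) (bounded degree, unoriented edges).

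For the return symmetry I would establish the stronger identity $p(v_0,v)=p(v,v_0)$ for all $v$, i.e.\ that one may take $\rho=1$. The point is that simple random walk on a $d$-regular graph is reversible with respect to the uniform measure; equivalently, its transition matrix is symmetric, $P(x,y)=P(y,x)$. Fix distinct vertices $u,v$ and expand
$$
p(u,v)=\P^u(\sigma_v<\sigma_u)=\sum_{\text{paths}}\ \prod_{i=0}^{n-1}P(x_i,x_{i+1}),
$$
the sum over all ``escape paths'' $u=x_0,x_1,\dots,x_n=v$ (with $n\ge 1$) whose interior vertices $x_1,\dots,x_{n-1}$ avoid $\{u,v\}$. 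The time-reversal map $x_i\mapsto x_{n-i}$ is a bijection between escape paths from $u$ to $v$ and escape paths from $v$ to $u$, and by the symmetry of $P$ it preserves the weight $\prod_iP(x_i,x_{i+1})$; summing gives $p(u,v)=p(v,u)$, and taking $u=v_0$ yields return symmetry with $\rho=1$. (Alternatively one may quote the electrical-network formula $\deg(u)\,\P^u(\sigma_v<\sigma_u)=\mathcal{C}_{\mathrm{eff}}(u\leftrightarrow v)$, see~\cite{RWEN}, whose right-hand side is symmetric in $u$ and $v$; this also shows $p(u,v)=\mathcal{C}_{\mathrm{eff}}(u\leftrightarrow v)/d>0$ for any two distinct vertices of a connected graph, so the inequalities in Assumption~\ref{Stas.assum} are not vacuous.) Recurrence of $X$ is not actually needed in the argument --- it is inherited from the hypotheses of Theorem~\ref{thm2ass}, and indeed return symmetry forces $X$ to be transient or null recurrent, in agreement with Remark~\ref{rem11}(b).

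I do not anticipate a genuine obstacle here. The only step that warrants a little care is the bookkeeping in the path-reversal: one should check that the two constraints defining an escape path are genuinely symmetric under reversal --- writing them as ``$x_i\neq u$ for $1\le i\le n$'' and ``$x_i\neq v$ for $0\le i\le n-1$'', the endpoints are automatically excluded, so reversal interchanges them exactly --- and that rearranging the series of nonnegative, summable terms is legitimate, which it is.
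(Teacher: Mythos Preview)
Your proof is correct and follows the paper's approach: transitivity forces a common degree $d$, giving uniform ellipticity with $\eps=1/d$, and then $p(v_0,v)=p(v,v_0)$, so one may take $\rho=1$. The paper simply asserts the latter identity ``by transitivity again'', whereas you spell it out via reversibility of SRW on a $d$-regular graph (path reversal, or equivalently the effective-conductance formula); this is a more explicit justification of the same fact rather than a genuinely different route, and as you note it actually uses only regularity, a consequence of transitivity.
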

\begin{proof}
The uniform ellipticity assumption is trivially satisfied since $X$ is a symmetric random walk, and each vertex is incident to the same number of edges, because of transitivity of $G$.
Now we have $p(v_0,v)=p(v,v_0)$ by transitivity again, and thus one can set $\rho=1$.
\end{proof}

\begin{corollary}
The symmetric random walks on $\Z^1$ and on $\Z^2$ satisfy Assumption~\ref{Stas.assum}.  Assuming $v_0={\bf 0}$, if $s(e)\sim \Vert e\Vert ^{-\a}$ for some $\a>0$, then $\Ximp$ is positive recurrent if and only if
\begin{itemize}
\item $\a>1$, in case of $\Z^1$; 
\item $\a>2$, in case of~$\Z^2$.
\end{itemize}
\end{corollary}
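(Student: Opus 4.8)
The plan is to combine Proposition~\ref{prop2ass} with Theorem~\ref{thm2ass}, and then evaluate the resulting edge-sum explicitly for $\Z^1$ and $\Z^2$. First I would note that both $\Z^1$ and $\Z^2$ are transitive graphs, and simple random walk on each of them is recurrent; hence Proposition~\ref{prop2ass} applies and shows that Assumption~\ref{Stas.assum} is satisfied (with $\rho=1$). Since SRW is null-recurrent on both lattices, Theorem~\ref{thm2ass} then tells us that $\Ximp$ is positive recurrent if and only if $\sum_{e\in E(G)} s(e)<\infty$.

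It remains to evaluate this sum under the hypothesis $s(e)\sim\Vert e\Vert^{-\a}$. The key step is to count, for each $k\ge 1$, the number $N_k$ of edges $e$ with $\Vert e\Vert=k$. For $\Z^1$ this is immediate: there are exactly two edges at each distance $k\ge 1$ (namely $(k-1,k)$ and $(-k,-k+1)$), together with the two edges at distance $0$; so $N_k$ is bounded (equal to $2$ for $k\ge 1$), and $\sum_e s(e)\sim\sum_{k\ge 1} 2\,k^{-\a}$, which converges iff $\a>1$. For $\Z^2$, one checks that the number of vertices at distance exactly $k$ from the origin grows like $4k$ (the diamond $\{|x|+|y|=k\}$), hence the number of edges $e$ with $\Vert e\Vert=k$ is also of order $k$; therefore $\sum_e s(e)\sim\sum_{k\ge 1} N_k\,k^{-\a}$ with $N_k\asymp k$, which behaves like $\sum_k k^{1-\a}$ and converges iff $\a>2$. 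In both cases I would make precise the meaning of ``$s(e)\sim\Vert e\Vert^{-\a}$'' using the notation $a_n\sim b_n$ defined in the paper (i.e.\ $s(e)$ is bounded above and below by positive constant multiples of $\Vert e\Vert^{-\a}$ for all $e$ with $\Vert e\Vert$ large), which is exactly enough to pin down the convergence or divergence of the series without worrying about the finitely many small-$\Vert e\Vert$ terms.

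The only mild subtlety — and the step I would be most careful about — is the edge-counting bookkeeping for $\Z^2$: one must be sure that $\Vert e\Vert=\min\{\Vert v_1\Vert,\Vert v_2\Vert\}$ with the graph distance (which on $\Z^d$ equals the $\ell^1$ norm), and then verify that the set of edges at level $k$ really does have cardinality $\Theta(k)$ rather than, say, $\Theta(k^2)$ or $\Theta(1)$; a quick way is to observe that each such edge touches a vertex of the diamond $\{\Vert v\Vert=k\}$, and each vertex of that diamond has bounded degree, so $N_k\le \mathrm{const}\cdot(\text{number of vertices at distance }k)=\Theta(k)$, while the lower bound $N_k\ge\Theta(k)$ follows because every vertex at distance $k$ is an endpoint of at least one edge at level $k$. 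Once this is in hand, the corollary follows by a one-line comparison with the $p$-series.
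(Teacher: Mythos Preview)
Your proposal is correct and follows essentially the same approach as the paper: invoke Proposition~\ref{prop2ass} (transitivity plus recurrence of SRW) to verify Assumption~\ref{Stas.assum}, apply Theorem~\ref{thm2ass} to reduce positive recurrence to the finiteness of $\sum_{e} s(e)$, and then count edges by distance to obtain a $p$-series. Your treatment is in fact slightly more careful than the paper's, particularly in justifying $N_k=\Theta(k)$ for $\Z^2$; one small slip is that the edges you name in $\Z^1$, $(k-1,k)$ and $(-k,-k+1)$, actually have $\Vert e\Vert=k-1$ rather than $k$, but this is only a labeling issue and does not affect the argument.
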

\begin{proof}
Both $G=\Z^1$ and $G=\Z^2$ are transitive, and the symmetric random walks on them are recurrent. Hence, by Proposition~\ref{prop2ass}, Assumption~\ref{Stas.assum} is fulfilled. Consequently, the positive recurrence of~$\Ximp$ is equivalent to the finiteness of
$$
\sum_{e\in E(G)} s(e)\sim
\begin{cases}
  \sum_{k=1} \frac{2}{k^\a} \sim \sum_{k} k^{-\a}
&\text{ on }\Z^1;\\
 \sum_{k=1} \frac{4k}{k^\a} \sim \sum_{k} k^{-(\a-1)} 
&\text{ on }\Z^2,
\end{cases}
$$
yielding the required result.
\end{proof}

% moved to front
%
%\section*{Acknowledgement}
%The hospitality of Microsoft Research, Redmond and of the University of Washington, Seattle are gratefully acknowledged by the first author.

\begin{thebibliography}{99}

\bibitem{Breiman}
Leo Breiman, 
\emph{On some limit theorems similar to the arc-sin law.} 
Teor. Verojatnost. i Primenen.~10 (1965) 351--360. 
 
\bibitem{SHU} 
Crane, Edward; Georgiou, Nicholas; Volkov, Stanislav; Wade, Andrew R.; Waters, Robert J.
\emph{The simple harmonic urn.} Ann. Probab.~39 (2011), no.~6, 2119--2177. 

\bibitem{RWEN}
Peter G.\ Doyle and Laurie Snell.
Random walks and electric networks. Carus Mathematical Monographs, 22. Mathematical Association of America, Washington, DC (1984).

\bibitem{coin}
J\'anos Engl\"ander and Stanislav Volkov.
\emph{Turning a Coin Over Instead of Tossing It.}
J.~Theor.~Probab.\ (2016). 
https://doi.org/10.1007/s10959-016-0725-1
%URL: \texttt{https://link.springer.com/content/pdf/10.1007/s10959-016-0725-1.pdf}
 
\bibitem{weighted.sums}  Kevei, P.; Mason, D.~M.
{\it The asymptotic distribution of randomly weighted sums and self-normalized sums.}  Electron. J.~Probab.~17 (2012), no. 46, 1--21.

\bibitem{lamp}
John Lamperti, \emph{Criteria for the recurrence or transience of stochastic process. I.
}, J.~Math.~Anal.~Appl.~1 (1960), 314--330.

\bibitem{Lawler} Gregory Lawler,
Intersections of Random Walks, Probability and Its Applications. Birkhauser, Boston (1991).

\bibitem{book}Menshikov, Mikhail; Popov, Serguei; Wade, Andrew. 
Non-homogeneous random walks. Lyapunov function methods for near-critical stochastic systems. Cambridge Tracts in Mathematics, 209. Cambridge University Press (2017).

\bibitem{Pemantle} Robin Pemantle, 
\emph{A survey of random processes with reinforcement.} 
Probab.\ Surv.\ 4 (2007), 1--79. 

\bibitem{Revesz} P\'al R\'ev\'esz,   Random walk in random and non-random environments. World Scientific (2005).

\bibitem{Volk}
Stanislav Volkov. \emph{A note on the simple random walk on $\Z^2$: probability of exiting sequences of sets}, 
Statistics \& Probability Letters~76 (2006), 891--897.

\bibitem{Zerner} Martin P. W. Zerner,  
\emph{Multi-excited random walks on integers.}  Probability Theory Related Fields~133 (2005), no. 1, 98--122.
\end {thebibliography}
\end{document}